\documentclass[reqno,fleqn]{amsart}
%%%%%%%%%%%%%%%%%%%%%%%%%%%%%%%%%%%%%%%%%%%%%%%%%%%%%%%%%%%%%%%%%%%%%%%%%%%%%%%%%%%%%%%%%%%%%%%%%%%%%%%%%%%%%%%%%%%%%%%%%%%%%%%%%%%%%%%%%%%%%%%%%%%%%%%%%%%%%%%%%%%%%%%%%%%%%%%%%%%%%%%%%%%%%%%%%%%%%%%%%%%%%%%%%%%%%%%%%%%%%%%%%%%%%%%%%%%%%%%%%%%%%%%%%%%%
\usepackage{amssymb}
\usepackage{amsmath}
\usepackage{amsfonts}
\usepackage{fancyhdr}

\setcounter{MaxMatrixCols}{10}
%TCIDATA{OutputFilter=LATEX.DLL}
%TCIDATA{Version=5.50.0.2953}
%TCIDATA{<META NAME="SaveForMode" CONTENT="1">}
%TCIDATA{BibliographyScheme=Manual}
%TCIDATA{Created=Sunday, March 30, 2014 15:14:52}
%TCIDATA{LastRevised=Sunday, June 25, 2017 05:30:07}
%TCIDATA{<META NAME="GraphicsSave" CONTENT="32">}
%TCIDATA{<META NAME="DocumentShell" CONTENT="Articles\SW\AMS Journal Article">}
%TCIDATA{Language=American English}
%TCIDATA{CSTFile=amsartci.cst}

\newtheorem{theorem}{Theorem}
\theoremstyle{plain}

\newtheorem{corollary}{Corollary}

\newtheorem{lemma}{Lemma}
\newtheorem{notation}{Notation}
\newtheorem{problem}{Problem}
\newtheorem{proposition}{Proposition}
\newtheorem{remark}{Remark}

\input{tcilatex}

\begin{document}
\title[$W^{2,2}$ interior convergence for some class of elliptic
anisotrpic....]{$W^{2,2}$ interior convergence for some class of elliptic
anisotropic singular pertubations problems}
\author{Chokri Ogabi}
\address{Acad\'{e}mie de Grenoble}
\email{chokri.ogabi@ac-grenoble.fr}
\date{22 Jun 2017}
\urladdr{}
\subjclass{35J15, 35B25.}
\keywords{Interior regularity, anisotropic singular pertubation, asymptotic
behavior, elliptic problems, pseudo Sobolev spaces.}

\begin{abstract}
In this paper, we deal with anisotropic singular perturbations of some class
of elliptic problem. We study the asymptotic behavior of the solution in
certain second order pseudo Sobolev space.
\end{abstract}

\maketitle

\section{Description of the problem}

In this paper, we study diffusion problems when the diffusion coefficients
in certain directions are going toward zero. More precisely we are
interested in studying the asymptotic behavior of the solution in certain
second order pseudo Sobolev space. We consider the following elliptic problem

\begin{equation}
\left\{ 
\begin{array}{cc}
-\func{div}(A_{\epsilon }\nabla u_{\epsilon })=f &  \\ 
u_{\epsilon }\in W_{0}^{1,2}(\Omega )\text{ \ \ \ \ \ \ \ } & 
\end{array}%
\right.  \label{1}
\end{equation}%
where $0<\epsilon \leq 1$ and $\Omega $ is a bounded domain (i.e. open
bounded connected subset) of $%
%TCIMACRO{\U{211d} }%
%BeginExpansion
\mathbb{R}
%EndExpansion
^{N}$ and $f\in L^{2}(\Omega ).$ We denote by $%
x=(x_{1},...,x_{N})=(X_{1},X_{2})$ the points in $%
%TCIMACRO{\U{211d} }%
%BeginExpansion
\mathbb{R}
%EndExpansion
^{N}$ where

\begin{equation*}
X_{1}=(x_{1},...,x_{q})\text{ and }X_{1}=(x_{q+1},...,x_{N}),
\end{equation*}%
with this notation we set%
\begin{equation*}
\nabla =(\partial _{x_{1}},...,\partial _{x_{N}})^{T}=\left( 
\begin{array}{c}
\nabla _{X_{1}} \\ 
\nabla _{X_{2}}%
\end{array}%
\right) ,
\end{equation*}%
where%
\begin{equation*}
\nabla _{X_{1}}=(\partial _{x_{1}},...,\partial _{x_{q}})^{T}\text{ and }%
\nabla _{X_{2}}=(\partial _{x_{q+1}},...,\partial _{x_{N}})^{T}
\end{equation*}%
The diffusion matrix $A_{\epsilon }$ is given by 
\begin{equation*}
A_{\epsilon }=(a_{ij}^{\epsilon })=\left( 
\begin{array}{cc}
\epsilon ^{2}A_{11} & \epsilon A_{12} \\ 
\epsilon A_{21} & A_{22}%
\end{array}%
\right) \text{ with }A=(a_{ij})=\left( 
\begin{array}{cc}
A_{11} & A_{12} \\ 
A_{21} & A_{22}%
\end{array}%
\right) ,
\end{equation*}%
where $A_{11}$ and $A_{22}$ are $q\times q$ and $(N-q)\times (N-q)$
matrices. The coefficients $a_{ij}^{\epsilon }$ are given by 
\begin{equation*}
a_{ij}^{\epsilon }=\left\{ 
\begin{array}{c}
\epsilon ^{2}a_{ij}\text{ for }i,j\in \left\{ 1,..,q\right\} \text{ \ \ \ \
\ \ \ \ \ \ \ \ \ \ \ \ \ \ \ \ \ \ } \\ 
a_{ij}\text{ for }i,j\in \left\{ q+1,..,N\right\} \text{ \ \ \ \ \ \ \ \ \ \
\ \ \ \ \ \ \ \ \ } \\ 
\epsilon a_{ij}\text{ for }i\in \left\{ 1,..,q\right\} \text{, }j\in \left\{
q+1,..,N\right\} \\ 
\epsilon a_{ij}^{\epsilon }\text{ for }i\in \left\{ q+1,..,N\right\} \text{, 
}j\in \left\{ 1,..,q\right\}%
\end{array}%
\right.
\end{equation*}%
We assume that $A\in L^{\infty }(\Omega )$ and for some $\lambda >0$ we have 
\begin{equation}
A(x)\zeta \cdot \zeta \geq \lambda \left\vert \zeta \right\vert ^{2}\text{,}%
\forall \zeta \in 
%TCIMACRO{\U{211d} }%
%BeginExpansion
\mathbb{R}
%EndExpansion
^{N},\text{ a.e }x\in \Omega .  \label{2}
\end{equation}%
Recall the Hilbert space introduced in \cite{integro} 
\begin{equation*}
V^{1,2}=\left\{ u\in L^{2}(\Omega )\text{ }\mid \nabla _{X_{2}}u\in
L^{2}(\Omega )\text{ and }u(X_{1},\cdot )\in W_{0}^{1,2}(\Omega _{X_{1}})%
\text{ a.e }X_{1}\in \Omega ^{1}\text{ }\right\} ,.
\end{equation*}%
equipped with the norm 
\begin{equation*}
\left\Vert u\right\Vert _{1,2}=\left( \left\Vert u\right\Vert _{L^{2}(\Omega
)}^{2}+\left\Vert \nabla _{X_{2}}u\right\Vert _{L^{2}(\Omega )}^{2}\right) ^{%
\frac{1}{2}}.
\end{equation*}%
Here $\Omega _{X_{1}}=\left\{ X_{2}\in 
%TCIMACRO{\U{211d} }%
%BeginExpansion
\mathbb{R}
%EndExpansion
^{N-q}:(X_{1},X_{2})\in \Omega \right\} $ and $\Omega ^{1}=P_{1}(\Omega )$
where $P_{1}$ is the natural projector $%
%TCIMACRO{\U{211d} }%
%BeginExpansion
\mathbb{R}
%EndExpansion
^{N}\rightarrow 
%TCIMACRO{\U{211d} }%
%BeginExpansion
\mathbb{R}
%EndExpansion
^{q}.$

We introduce the second order local pseudo Sobolev space%
\begin{equation*}
V_{loc}^{2,2}=\left\{ u\in V^{1,2}\mid \nabla _{X_{2}}^{2}u\in
L_{loc}^{2}(\Omega )\right\} ,
\end{equation*}%
equipped with the family of norms $(\left\Vert \cdot \right\Vert
_{2,2}^{\omega })_{\omega }$ given by 
\begin{equation*}
\left\Vert u\right\Vert _{2,2}^{\omega }=\left( \left\Vert u\right\Vert
_{L^{2}(\Omega )}^{2}+\left\Vert \nabla _{X_{2}}u\right\Vert _{L^{2}(\Omega
)}^{2}+\left\Vert \nabla _{X_{2}}^{2}u\right\Vert _{L^{2}(\omega
)}^{2}\right) ^{\frac{1}{2}}\text{, }\omega \subset \subset \Omega \text{
open}
\end{equation*}%
where $\nabla _{X_{2}}^{2}u$ is the Hessian matrix of $u$ taken in the $%
X_{2} $ direction, the term $\left\Vert \nabla _{X_{2}}^{2}u\right\Vert
_{L^{2}(\omega )}^{2}$ is given by 
\begin{equation*}
\left\Vert \nabla _{X_{2}}^{2}u\right\Vert _{L^{2}(\omega
)}^{2}=\dsum\limits_{i,j=q+1}^{N}\left\Vert \partial _{ij}^{2}u\right\Vert
_{L^{2}(\omega )}^{2}.
\end{equation*}%
We can show that $V_{loc}^{2,2}$ is a Fr\'{e}chet space (i.e. locally
convex, metrizable and complete). We also define the following%
\begin{equation*}
\left\Vert \nabla _{X_{1}}^{2}u\right\Vert _{L^{2}(\omega
)}^{2}=\dsum\limits_{i,j=1}^{q}\left\Vert \partial _{ij}^{2}u\right\Vert
_{L^{2}(\omega )}^{2},
\end{equation*}%
and%
\begin{equation*}
\left\Vert \nabla _{X_{1}X_{2}}^{2}u\right\Vert _{L^{2}(\omega
)}^{2}=\dsum\limits_{i=1}^{q}\dsum\limits_{j=q+1}^{N}\left\Vert \partial
_{ij}^{2}u\right\Vert _{L^{2}(\omega )}^{2}.
\end{equation*}

As $\epsilon \rightarrow 0$, the Limit problem is given by

\begin{equation}
\left\{ 
\begin{array}{cc}
-\func{div}(A_{22}\nabla u_{0}(X_{1},\cdot )=f(X_{1},\cdot )\text{ \ \ \ \ \
\ \ \ \ \ \ \ \ } &  \\ 
u_{0}(X_{1},\cdot )\in W_{0}^{1,2}(\Omega _{X_{1}})\text{ \ \ \ \ \ \ \ \ \
\ \ \ \ \ \ \ \ \ \ \ \ \ } & \text{a.e }X_{1}\in \Omega ^{1}%
\end{array}%
\right.  \label{3}
\end{equation}

The existence and the uniqueness of the $W_{0}^{1,2}$ weak solutions to (\ref%
{1}) and (\ref{3}) follow from the Lax-Milgram theorem. In \cite{chipot} the
authors studied the relationship between $u_{\epsilon }$ and $u_{0}$ and
they proved that $u_{0}\in V^{1,2}$ and the following convergences (see
Theorem 2.1 in the above reference)%
\begin{equation}
u_{\epsilon }\rightarrow u_{0}\text{ in }V^{1,2}\text{ and }\epsilon \nabla
_{X_{1}}u_{\epsilon }\rightarrow 0\text{ in }L^{2}(\Omega ).  \label{19}
\end{equation}%
For the $L^{p}$ case we refer the reader to \cite{chok}, and \cite{integro},%
\cite{chip}, \cite{nonlinearchip} for other related problems. In this paper,
we deal with the asymptotic behavior of the second derivatives of $%
u_{\epsilon }$, in other words we show the convergence of $u_{\epsilon }$ in
the space $V_{loc}^{2,2}$ introduced previously. The arguments are based on
the Riesz-Fr\'{e}chet-Kolmogorov compacity theorem in $L^{p}$ spaces. Let us
give the main result

\begin{theorem}
Assume that $A\in L^{\infty }(\Omega )\cap C^{1}(\Omega )$ with (\ref{2}),
suppose that $f\in L^{2}(\Omega )$ then $u_{0}\in V_{loc}^{2,2}$ and $%
u_{\epsilon }\rightarrow u_{0}$ in $V_{loc}^{2,2}$, where $u_{\epsilon }\in
W_{0}^{1,2}(\Omega )\cap W_{loc}^{2,2}(\Omega )$ and $u_{0}$ are the unique
weak solutions to (\ref{1}) and (\ref{3}) respectively. In addition, the
convergences $\epsilon ^{2}\nabla _{X_{1}}^{2}u_{\epsilon }\rightarrow 0$, $%
\epsilon \nabla _{X_{1}X_{2}}^{2}u_{\epsilon }\rightarrow 0$ hold in $%
L_{loc}^{2}(\Omega ).$
\end{theorem}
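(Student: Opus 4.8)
The plan is to localize the equation and then apply interior elliptic estimates together with the Riesz--Fréchet--Kolmogorov compactness theorem applied in the $X_2$ variable only. First I would fix $\omega\subset\subset\Omega$ and pick a cutoff; the starting point is the a priori bound: by Lax--Milgram and the ellipticity $(\ref{2})$, $\|\sqrt{A_\epsilon}\nabla u_\epsilon\|_{L^2}$ is bounded uniformly in $\epsilon$, which together with the block structure of $A_\epsilon$ gives uniform bounds for $\|u_\epsilon\|_{L^2}$, $\|\nabla_{X_2}u_\epsilon\|_{L^2}$, $\|\epsilon\nabla_{X_1}u_\epsilon\|_{L^2}$. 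Next I would rewrite $(\ref{1})$ in the ``scaled'' coordinates: with $A_\epsilon=D_\epsilon A D_\epsilon$ where $D_\epsilon=\mathrm{diag}(\epsilon I_q,I_{N-q})$, so that $-\mathrm{div}(A_\epsilon\nabla u_\epsilon)=f$ is, after testing, the Euler equation for the quadratic form built from $A$ and the gradient $(\epsilon\nabla_{X_1},\nabla_{X_2})u_\epsilon$. The key observation is that $v_\epsilon(x):=u_\epsilon$ viewed with the anisotropic gradient $\widetilde\nabla_\epsilon u_\epsilon:=(\epsilon\nabla_{X_1}u_\epsilon,\nabla_{X_2}u_\epsilon)$ solves a uniformly elliptic equation $-\,\widetilde{\mathrm{div}}_\epsilon(A\,\widetilde\nabla_\epsilon u_\epsilon)=f$ with ellipticity constant $\lambda$ independent of $\epsilon$; since $A\in C^1(\Omega)$, the classical interior $W^{2,2}$ (Nirenberg difference-quotient) estimate applies and yields, on $\omega'$ with $\omega\subset\subset\omega'\subset\subset\Omega$, a bound on all second anisotropic derivatives
\[
\|\epsilon^2\nabla_{X_1}^2u_\epsilon\|_{L^2(\omega)}^2+\|\epsilon\nabla_{X_1X_2}^2u_\epsilon\|_{L^2(\omega)}^2+\|\nabla_{X_2}^2u_\epsilon\|_{L^2(\omega)}^2\;\le\;C(\omega,\omega',\lambda,\|A\|_{C^1})\big(\|f\|_{L^2(\omega')}^2+\|u_\epsilon\|_{W^{1,2}_\epsilon(\omega')}^2\big),
\]
the right side being uniformly bounded. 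In particular $u_\epsilon$ is bounded in $W^{2,2}_{loc}$ (for each fixed $\epsilon$), justifying $u_\epsilon\in W^{2,2}_{loc}(\Omega)$ as claimed, and $\nabla_{X_2}^2u_\epsilon$ is bounded in $L^2_{loc}$ uniformly in $\epsilon$, while $\epsilon^2\nabla_{X_1}^2u_\epsilon$ and $\epsilon\nabla_{X_1X_2}^2u_\epsilon$ are bounded in $L^2_{loc}$ uniformly in $\epsilon$.

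Now I would upgrade weak convergence to strong convergence of $\nabla_{X_2}^2u_\epsilon$ in $L^2_{loc}$ via Riesz--Fréchet--Kolmogorov. For this one needs equi-continuity of translates: for $h\in\mathbb{R}^{N-q}$ (translations only in the $X_2$ directions) one must show $\sup_{0<\epsilon\le1}\|\tau_h(\nabla_{X_2}^2u_\epsilon)-\nabla_{X_2}^2u_\epsilon\|_{L^2(\omega)}\to0$ as $|h|\to0$, uniformly in $\epsilon$. The mechanism is to apply the interior estimate above to the difference quotient $\delta_h u_\epsilon:=(\tau_hu_\epsilon-u_\epsilon)/|h|$ in the $X_2$ direction: $\delta_hu_\epsilon$ satisfies an equation of the same type with right-hand side $\delta_hf$ plus a commutator term coming from $\delta_h$ applied to the $X_2$-dependence of $A$ (here $A\in C^1$ is used again, this time for the coefficient's $X_2$-regularity, giving a bound $\|\delta_h A\|_{L^\infty}\le\|\nabla_{X_2}A\|_{L^\infty}$). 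Combined with the already established uniform $W^{2,2}_{loc}$-type bound on $u_\epsilon$ this produces a uniform (in $\epsilon$) modulus-of-continuity estimate of the form $\|\tau_h\nabla_{X_2}u_\epsilon-\nabla_{X_2}u_\epsilon\|_{L^2(\omega)}\le C\,\omega_f(|h|)+C|h|$; iterating one order higher (or applying the Nirenberg argument to $\delta_h u_\epsilon$ directly) controls the translates of the second $X_2$-derivatives. Together with the uniform $L^2_{loc}$ bound and the trivial tightness (the functions live on $\omega\subset\subset\Omega$), Riesz--Fréchet--Kolmogorov gives that $\{\nabla_{X_2}^2u_\epsilon\}$ is relatively compact in $L^2(\omega)$.

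Finally I would identify the limit. From $(\ref{19})$ we already have $u_\epsilon\to u_0$ in $V^{1,2}$ and $\epsilon\nabla_{X_1}u_\epsilon\to0$ in $L^2$; combined with the uniform $L^2_{loc}$ bounds on $\epsilon^2\nabla_{X_1}^2u_\epsilon$, $\epsilon\nabla_{X_1X_2}^2u_\epsilon$, $\nabla_{X_2}^2u_\epsilon$ this forces, up to a subsequence, $\epsilon^2\nabla_{X_1}^2u_\epsilon\rightharpoonup0$ and $\epsilon\nabla_{X_1X_2}^2u_\epsilon\rightharpoonup0$ weakly in $L^2_{loc}$ (the limits must be $0$ because $\epsilon^2\nabla_{X_1}^2u_\epsilon=\nabla_{X_1}(\epsilon^2\nabla_{X_1}u_\epsilon)=\epsilon\nabla_{X_1}(\epsilon\nabla_{X_1}u_\epsilon)\to0$ in $\mathcal D'$, and similarly for the mixed term using $\epsilon\nabla_{X_1}u_\epsilon\to0$), while $\nabla_{X_2}^2u_\epsilon$ converges strongly along the subsequence to some $G\in L^2(\omega)$; passing to the distributional limit in the equation, $u_0$ satisfies $-\mathrm{div}(A_{22}\nabla_{X_2}u_0)=f$ a.e.\ in $X_1$, and a distributional computation identifies $G=\nabla_{X_2}^2u_0$, whence $u_0\in V^{2,2}_{loc}$. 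To promote the convergences to norm-convergence in $\|\cdot\|_{2,2}^\omega$ for \emph{every} $\omega$ without passing to a subsequence, I would invoke the subsequence principle: every subsequence has a further subsequence converging to the same limit $u_0$ (uniqueness of the limit problem $(\ref{3})$), hence the whole family converges. I also need to upgrade the weak-to-$0$ convergence of $\epsilon^2\nabla_{X_1}^2u_\epsilon$ and $\epsilon\nabla_{X_1X_2}^2u_\epsilon$ to \emph{strong} $L^2_{loc}$ convergence; this follows the same Riesz--Fréchet--Kolmogorov route but is in fact simpler, since the limit is $0$: one only needs the uniform $L^2_{loc}$ bound plus a uniform modulus of continuity of translates in the $X_1$ directions, obtained once more by the difference-quotient estimate applied to $\epsilon$-weighted first derivatives. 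The main obstacle is the uniform-in-$\epsilon$ equi-continuity of translates of the \emph{second} derivatives: one must be careful that all constants in the Nirenberg difference-quotient argument depend only on $\lambda$ and $\|A\|_{C^1(\omega')}$ and on the cutoff, not on $\epsilon$ — this is exactly what the anisotropic-ellipticity reformulation (uniform constant $\lambda$) is designed to guarantee, and it is the crux of the argument.
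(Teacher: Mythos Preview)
Your overall architecture---localize, obtain $\epsilon$-uniform interior second-order bounds, verify the Riesz--Fr\'echet--Kolmogorov hypotheses, then identify the limit and invoke the subsequence principle---matches the paper. The paper implements the uniform bounds differently (Fourier transform on $\mathbb{R}^N$ for the constant-coefficient operator, then a freezing-of-coefficients argument), whereas you propose a direct Nirenberg difference-quotient estimate for the variable-coefficient anisotropic operator. Both routes are viable, but two points in your write-up need attention.

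\medskip
\textbf{The main gap: translations must be taken in all of $\mathbb{R}^N$.} You restrict the equi-continuity check to $h\in\mathbb{R}^{N-q}$, i.e.\ translations only in the $X_2$ directions. This is not sufficient for Riesz--Fr\'echet--Kolmogorov compactness in $L^2(\omega)$ with $\omega\subset\mathbb{R}^N$: a family of the form $g_k(X_1,X_2)=\phi_k(X_1)\psi(X_2)$ with $(\phi_k)$ bounded but non-compact in $L^2$ is equi-continuous under $X_2$-shifts yet not relatively compact. The paper explicitly takes $h\in\mathbb{R}^N$ throughout (Lemma~2, Proposition~2, Proposition~5, Corollary~2). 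The fix is routine---your own mechanism works just as well for $X_1$-translations, using the $X_1$-regularity of $A$ (available since $A\in C^1(\Omega)$) and $\|\tau_h f-f\|_{L^2}\to 0$ for $h\to 0$ in any direction---but as written the argument is incomplete. Relatedly, you should work with the plain difference $\tau_h u_\epsilon-u_\epsilon$ rather than the quotient $\delta_h u_\epsilon$: dividing by $|h|$ forces $\|\delta_h f\|_{L^2}$ onto the right-hand side, which is not uniformly bounded for $f\in L^2$ only.

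\medskip
\textbf{A rigor issue in the uniform bound.} The sentence ``the classical interior $W^{2,2}$ (Nirenberg difference-quotient) estimate applies'' to $-\widetilde{\mathrm{div}}_\epsilon(A\widetilde\nabla_\epsilon u_\epsilon)=f$ hides the only real difficulty. The textbook theorem is stated for $-\mathrm{div}(A\nabla u)=f$, and if you run the standard proof with the test function $-D_j^{-h}(\rho^2 D_j^h u_\epsilon)$ in a direction $j\le q$, the bound on $\|D_j^{-h}(\rho^2 D_j^h u_\epsilon)\|_{L^2}$ produces a factor $\epsilon^{-1}$ when you convert $\partial_j$ to $\epsilon\partial_j$ inside $\widetilde\nabla_\epsilon$, and the constant blows up. One must either (i) rescale $y=(X_1/\epsilon,X_2)$, check that the Jacobian factors $\epsilon^{-q}$ cancel on both sides and that $\mathrm{dist}(\tilde\omega,\partial\tilde\omega')$ and $\|A(\epsilon y_1,y_2)\|_{C^1_y}$ do not degenerate, or (ii) redo Nirenberg with anisotropically scaled test functions, e.g.\ $-\epsilon^2 D_j^{-h}(\rho^2 D_j^h u_\epsilon)$ for $j\le q$. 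Either works, but you should spell one of them out; this is exactly the ``crux'' you flag at the end, and the paper circumvents it by passing to constant coefficients via Fourier (Lemma~1, Proposition~3) where the $\epsilon$-uniformity is read off directly from the symbol.
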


\section{Some useful tools}

\begin{proposition}
The vector space $V_{loc}^{2,2}$ equipped with the family of norms $%
(\left\Vert \cdot \right\Vert _{2,2}^{\omega })_{\omega }$ is a Fr\'{e}chet
space.
\end{proposition}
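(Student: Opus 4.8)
The plan is to verify the three defining properties of a Fréchet space in turn: that the family $(\|\cdot\|_{2,2}^\omega)_\omega$ induces a metrizable locally convex topology, that it is Hausdorff, and that it is complete. First I would note that the index set of open sets $\omega\subset\subset\Omega$ admits a countable cofinal subfamily: choose an exhaustion $\omega_1\subset\subset\omega_2\subset\subset\cdots$ with $\bigcup_k\omega_k=\Omega$ (e.g.\ $\omega_k=\{x\in\Omega:\operatorname{dist}(x,\partial\Omega)>1/k,\ |x|<k\}$). Since $\|u\|_{2,2}^{\omega}\le\|u\|_{2,2}^{\omega'}$ whenever $\omega\subset\omega'$, any seminorm $\|\cdot\|_{2,2}^\omega$ is dominated by $\|\cdot\|_{2,2}^{\omega_k}$ for $k$ large enough that $\omega\subset\omega_k$; hence the countable family $(\|\cdot\|_{2,2}^{\omega_k})_k$ generates the same topology. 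A countable separating family of seminorms gives a locally convex metrizable topology via the standard metric $d(u,v)=\sum_k 2^{-k}\tfrac{\|u-v\|_{2,2}^{\omega_k}}{1+\|u-v\|_{2,2}^{\omega_k}}$, so it remains to check separation and completeness.

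For the Hausdorff property it suffices that the seminorms separate points: if $\|u\|_{2,2}^{\omega_k}=0$ for all $k$, then in particular $\|u\|_{L^2(\Omega)}=0$, so $u=0$ a.e. Thus the topology is that of a metrizable locally convex space, and what remains — and this is the one step requiring a genuine argument — is completeness.

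For completeness, let $(u_n)$ be a Cauchy sequence for $d$; equivalently $(u_n)$ is Cauchy in each seminorm $\|\cdot\|_{2,2}^{\omega_k}$. In particular $(u_n)$ is Cauchy in $\|\cdot\|_{1,2}$, and since $V^{1,2}$ is a Hilbert space (hence complete) there is $u\in V^{1,2}$ with $u_n\to u$ in $V^{1,2}$; in particular $u_n\to u$ and $\nabla_{X_2}u_n\to\nabla_{X_2}u$ in $L^2(\Omega)$. Next, for each fixed $k$ the sequence $(\nabla_{X_2}^2 u_n)$ is Cauchy in $L^2(\omega_k)$ (for the Hilbert–Schmidt norm on the $X_2$–Hessian entries), hence converges to some $g^{(k)}\in L^2(\omega_k;\mathbb R^{(N-q)\times(N-q)})$; by uniqueness of $L^2$ limits the $g^{(k)}$ are consistent on overlaps and patch together to a single $g\in L^2_{loc}(\Omega)$. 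It remains to identify $g$ with the distributional $X_2$–Hessian of $u$: for any test function $\varphi\in C_c^\infty(\Omega)$ and indices $i,j\in\{q+1,\dots,N\}$, pass to the limit in $\int_\Omega u_n\,\partial^2_{ij}\varphi = \int_\Omega (\partial^2_{ij}u_n)\varphi$, using $L^2(\Omega)$–convergence on the left and $L^2(\operatorname{supp}\varphi)$–convergence on the right (choosing $k$ with $\operatorname{supp}\varphi\subset\omega_k$), to obtain $\int_\Omega u\,\partial^2_{ij}\varphi=\int_\Omega g_{ij}\varphi$. Hence $\nabla_{X_2}^2 u=g\in L^2_{loc}(\Omega)$, so $u\in V_{loc}^{2,2}$, and $\|u_n-u\|_{2,2}^{\omega_k}\to0$ for every $k$, i.e.\ $u_n\to u$ in $V_{loc}^{2,2}$. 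This establishes completeness and finishes the proof.

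The main obstacle is precisely the last identification step: one must be careful that the a.e.\ limit objects obtained on each $\omega_k$ are mutually compatible and that the limiting weak second derivative of the $V^{1,2}$–limit $u$ genuinely coincides with them, which is where the distributional testing against $C_c^\infty$ functions with support in some $\omega_k$ is used; everything else is the standard verification that a countable separating family of seminorms yields a Fréchet space.
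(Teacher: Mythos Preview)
Your proof is correct and follows essentially the same route as the paper: reduce to a countable exhaustion $(\omega_k)$, use the standard metric $d(u,v)=\sum_k 2^{-k}\|u-v\|_{2,2}^{\omega_k}/(1+\|u-v\|_{2,2}^{\omega_k})$, and prove completeness by extracting $L^2$ limits on $\Omega$ and on each $\omega_k$ and identifying them via the continuity of the differential operators in the sense of distributions. The only minor differences are cosmetic---you invoke the completeness of $V^{1,2}$ as a Hilbert space rather than working directly in $L^2(\Omega)$, and you spell out the Hausdorff property (which is automatic here since each $\|\cdot\|_{2,2}^{\omega}$ already contains $\|u\|_{L^2(\Omega)}$ and is therefore a genuine norm).
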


\begin{proof}
Let $(\omega _{n})_{n\in 
%TCIMACRO{\U{2115} }%
%BeginExpansion
\mathbb{N}
%EndExpansion
}$ be a countable open covering of $\Omega $ with $\omega _{n}\subset
\subset $ $\Omega $, $\omega _{n}\subset \omega _{n+1}$ for every $n\in 
%TCIMACRO{\U{2115} }%
%BeginExpansion
\mathbb{N}
%EndExpansion
.$ The countable family $(\left\Vert \cdot \right\Vert _{2,2}^{\omega
_{n}})_{n\in 
%TCIMACRO{\U{2115} }%
%BeginExpansion
\mathbb{N}
%EndExpansion
}$ define a base of norms for the $V^{2,2}$ topology. The general theory of
locally convex topological vector spaces shows that this topology is
metrizable, explicitly a distance $d$ which define this topology is given by
( see for instance \cite{vokhac})%
\begin{equation}
d(u,v)=\dsum\limits_{n=0}^{\infty }2^{-n}\frac{\left\Vert u-v\right\Vert
_{2,2}^{\omega _{n}}}{1+\left\Vert u-v\right\Vert _{2,2}^{\omega _{n}}}\text{%
, \ \ \ }u,v\in V_{loc}^{2,2}.  \label{20}
\end{equation}%
Let $(u_{m})$ be a Cauchy sequence in $V_{loc}^{2,2}$ then $(u_{m})$ is a
Cauchy sequence for each norm $\left\Vert \cdot \right\Vert _{2,2}^{\omega
_{n}}$, $n\in 
%TCIMACRO{\U{2115} }%
%BeginExpansion
\mathbb{N}
%EndExpansion
.$ Whence, there exist $u,v\in L^{2}(\Omega )$ such that%
\begin{equation*}
u_{m}\rightarrow u\text{, }\nabla _{X_{2}}u_{m}\rightarrow v\text{ in }%
L^{2}(\Omega )\text{,}
\end{equation*}%
and for every $n\in 
%TCIMACRO{\U{2115} }%
%BeginExpansion
\mathbb{N}
%EndExpansion
$ fixed there exists $w_{n}\in L^{2}(\omega _{n})$ such that 
\begin{equation*}
\nabla _{X_{2}}^{2}u_{m}\rightarrow w_{n}\text{ in }L^{2}(\omega _{n}).
\end{equation*}

The continuity of $\nabla _{X_{2}}$ and $\nabla _{X_{2}}^{2}$ on $D^{\prime
}(\Omega )$ and $D^{\prime }(\omega _{n})$ shows that $v=\nabla _{X_{2}}u$
and $\nabla _{X_{2}}^{2}u=w_{n}$ for every $n\in 
%TCIMACRO{\U{2115} }%
%BeginExpansion
\mathbb{N}
%EndExpansion
.$ Hence $u\in V_{loc}^{2,2}$ and 
\begin{equation*}
\forall n\in 
%TCIMACRO{\U{2115} }%
%BeginExpansion
\mathbb{N}
%EndExpansion
:\left\Vert u_{m}-u\right\Vert _{2,2}^{\omega _{n}}\rightarrow 0\text{ as }%
m\rightarrow \infty .
\end{equation*}%
Finally the normal convergence of the series (\ref{20}) implies 
\begin{equation*}
d(u_{m},u)\rightarrow 0\text{ as }m\rightarrow \infty ,
\end{equation*}%
and therefore the completion of $V_{loc}^{2,2}$ follows.
\end{proof}

\begin{remark}
Notice that a sequence $(u_{m})$ in $V_{loc}^{2,2}$ converges to $u$ with
respect to $d$ if and only if $\left\Vert u_{m}-u\right\Vert _{2,2}^{\omega
}\rightarrow 0$ as $m\rightarrow \infty ,$ for every $\omega \subset \subset 
$ $\Omega $ open.
\end{remark}

Now, let us give two useful lemmas

\begin{lemma}
Let $f\in L^{2}(%
%TCIMACRO{\U{211d} }%
%BeginExpansion
\mathbb{R}
%EndExpansion
^{N}),$ for every $\epsilon \in (0,1]$ let $u_{\epsilon }\in W^{2,2}(%
%TCIMACRO{\U{211d} }%
%BeginExpansion
\mathbb{R}
%EndExpansion
^{N})$ such that 
\begin{equation}
-\epsilon ^{2}\Delta _{X_{1}}u_{\epsilon }(x)-\Delta _{X_{2}}u_{\epsilon
}(x)=f(x)\text{ a.e }x\in 
%TCIMACRO{\U{211d} }%
%BeginExpansion
\mathbb{R}
%EndExpansion
^{N}  \label{10}
\end{equation}%
then for every $\epsilon \in (0,1]$ we have the bounds%
\begin{eqnarray*}
\left\Vert \nabla _{X_{2}}^{2}u_{\epsilon }\right\Vert _{L^{2}(%
%TCIMACRO{\U{211d} }%
%BeginExpansion
\mathbb{R}
%EndExpansion
^{N})} &\leq &\left\Vert f\right\Vert _{L^{2}(%
%TCIMACRO{\U{211d} }%
%BeginExpansion
\mathbb{R}
%EndExpansion
^{N})}, \\
\epsilon ^{2}\left\Vert \nabla _{X_{1}}^{2}u_{\epsilon }\right\Vert _{L^{2}(%
%TCIMACRO{\U{211d} }%
%BeginExpansion
\mathbb{R}
%EndExpansion
^{N})} &\leq &\left\Vert f\right\Vert _{L^{2}(%
%TCIMACRO{\U{211d} }%
%BeginExpansion
\mathbb{R}
%EndExpansion
^{N})}, \\
\sqrt{2}\epsilon \left\Vert \nabla _{X_{1}X_{2}}^{2}u_{\epsilon }\right\Vert
_{L^{2}(%
%TCIMACRO{\U{211d} }%
%BeginExpansion
\mathbb{R}
%EndExpansion
^{N})} &\leq &\left\Vert f\right\Vert _{L^{2}(%
%TCIMACRO{\U{211d} }%
%BeginExpansion
\mathbb{R}
%EndExpansion
^{N})}.
\end{eqnarray*}
\end{lemma}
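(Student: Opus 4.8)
The plan is to work on the Fourier side. Since $u_\epsilon \in W^{2,2}(\mathbb{R}^N)$ and $f \in L^2(\mathbb{R}^N)$, I would apply the Fourier transform $\widehat{\cdot}$ to the equation \eqref{10}. Writing $\xi = (\xi_1,\dots,\xi_N) = (\Xi_1,\Xi_2)$ with $\Xi_1 \in \mathbb{R}^q$ and $\Xi_2 \in \mathbb{R}^{N-q}$, the PDE becomes the pointwise identity
\begin{equation*}
(\epsilon^2 |\Xi_1|^2 + |\Xi_2|^2)\,\widehat{u_\epsilon}(\xi) = \widehat{f}(\xi)\quad\text{a.e. }\xi\in\mathbb{R}^N .
\end{equation*}
By Plancherel, the three quantities to be estimated have Fourier-side expressions: $\|\nabla_{X_2}^2 u_\epsilon\|_{L^2}^2 = \sum_{i,j=q+1}^N \|\xi_i\xi_j \widehat{u_\epsilon}\|_{L^2}^2$, and similarly for $\nabla_{X_1}^2 u_\epsilon$ and $\nabla_{X_1X_2}^2 u_\epsilon$ with the appropriate index ranges.

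Next I would bound each family of multipliers by $(\epsilon^2|\Xi_1|^2 + |\Xi_2|^2)$. For the $X_2$-Hessian: $\sum_{i,j=q+1}^N |\xi_i\xi_j|^2 = |\Xi_2|^4 \le (\epsilon^2|\Xi_1|^2 + |\Xi_2|^2)^2$, which immediately gives the first bound. For the $X_1$-Hessian, $\epsilon^4\sum_{i,j=1}^q|\xi_i\xi_j|^2 = \epsilon^4|\Xi_1|^4 \le (\epsilon^2|\Xi_1|^2 + |\Xi_2|^2)^2$, giving the second. For the mixed Hessian, $\epsilon^2\sum_{i=1}^q\sum_{j=q+1}^N|\xi_i\xi_j|^2 = \epsilon^2|\Xi_1|^2|\Xi_2|^2$; since $2\epsilon^2|\Xi_1|^2|\Xi_2|^2 \le (\epsilon^2|\Xi_1|^2+|\Xi_2|^2)^2$ by AM--GM, we get $2\epsilon^2\|\nabla_{X_1X_2}^2u_\epsilon\|_{L^2}^2 \le \|f\|_{L^2}^2$, which is the third (stated with the $\sqrt 2$).

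Assembling: in each case I would multiply $|\widehat{u_\epsilon}(\xi)|^2$ by the relevant multiplier, use the pointwise bound against $(\epsilon^2|\Xi_1|^2+|\Xi_2|^2)^2|\widehat{u_\epsilon}(\xi)|^2 = |\widehat f(\xi)|^2$, integrate over $\mathbb{R}^N$, and apply Plancherel on both ends. A minor technical point worth a sentence: one should justify that $\widehat{\partial_{ij}^2 u_\epsilon} = -\xi_i\xi_j\widehat{u_\epsilon}$ holds in $L^2$, which is immediate since $u_\epsilon \in W^{2,2}(\mathbb{R}^N)$ so all second derivatives are genuine $L^2$ functions and the Fourier transform intertwines differentiation with multiplication by $i\xi$.

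I do not expect a serious obstacle here; the only place requiring care is the mixed-derivative estimate, where one must remember to use $2ab \le (a+b)^2$ rather than a cruder bound in order to recover the sharp constant $\sqrt 2$, and one should state explicitly that the equation holding a.e.\ in $x$ transfers to the multiplier identity holding a.e.\ in $\xi$ because the Fourier transform is an isometry of $L^2$. If one prefers to avoid Fourier analysis, an alternative is to integrate the equation against $\partial_{ij}^2 u_\epsilon$, $\Delta_{X_1}u_\epsilon$, etc., and integrate by parts twice using density of smooth functions; but the Fourier argument is cleaner and makes the constants transparent, so that is the route I would present.
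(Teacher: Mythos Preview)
Your proposal is correct and follows essentially the same route as the paper: apply the Fourier transform to the equation, square the resulting multiplier identity to obtain $(\epsilon^2|\Xi_1|^2+|\Xi_2|^2)^2|\widehat{u_\epsilon}|^2=|\widehat f|^2$, observe that each of the three blocks $|\Xi_2|^4$, $\epsilon^4|\Xi_1|^4$, $2\epsilon^2|\Xi_1|^2|\Xi_2|^2$ appearing in the expansion is dominated by the full square, and conclude by Plancherel. The paper presents exactly this argument, so there is nothing to add.
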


\begin{proof}
Let $\mathcal{F}$ be the Fourier transform defined on $L^{2}(%
%TCIMACRO{\U{211d} }%
%BeginExpansion
\mathbb{R}
%EndExpansion
^{N})$ as the extension, by density, of the Fourier transform defined on the
Schwartz space $\mathcal{S}(%
%TCIMACRO{\U{211d} }%
%BeginExpansion
\mathbb{R}
%EndExpansion
^{N})$ by%
\begin{equation*}
\mathcal{F}(u)(\xi )=(2\pi )^{-\frac{N}{2}}\int_{%
%TCIMACRO{\U{211d} }%
%BeginExpansion
\mathbb{R}
%EndExpansion
^{N}}u(x)e^{-ix\cdot \xi }dx\text{, \ \ }u\in \mathcal{S}(%
%TCIMACRO{\U{211d} }%
%BeginExpansion
\mathbb{R}
%EndExpansion
^{N})
\end{equation*}%
where $\cdot $ is the standard scalar product of $%
%TCIMACRO{\U{211d} }%
%BeginExpansion
\mathbb{R}
%EndExpansion
^{N}$. Applying $\mathcal{F}$ on (\ref{10}) we obtain%
\begin{equation*}
\left( \epsilon ^{2}\dsum\limits_{i=1}^{q}\xi _{i}^{2}+\dsum_{i=q+1}^{N}\xi
_{i}^{2}\right) \mathcal{F}(u_{\epsilon })(\xi )=\mathcal{F}(f)(\xi ),
\end{equation*}%
then 
\begin{equation}
\left( \epsilon ^{4}\dsum\limits_{i,j=1}^{q}\xi _{i}^{2}\xi
_{j}^{2}+\dsum_{i,j=q+1}^{N}\xi _{i}^{2}\xi _{j}^{2}+2\epsilon
^{2}\dsum_{j=q+1}^{N}\dsum\limits_{i=1}^{q}\xi _{i}^{2}\xi _{j}^{2}\right)
\left\vert \mathcal{F}(u_{\epsilon })(\xi )\right\vert ^{2}=\left\vert 
\mathcal{F}(f)(\xi )\right\vert ^{2},  \label{4}
\end{equation}%
thus%
\begin{equation*}
\dsum_{i,j=q+1}^{N}\xi _{i}^{2}\xi _{j}^{2}\left\vert \mathcal{F}%
(u_{\epsilon })(\xi )\right\vert ^{2}\leq \left\vert \mathcal{F}(f)(\xi
)\right\vert ^{2},
\end{equation*}%
hence%
\begin{equation*}
\dsum_{i,j=q+1}^{N}\left\vert \mathcal{F}(\partial _{ij}^{2}u_{\epsilon
})(\xi )\right\vert ^{2}\leq \left\vert \mathcal{F}(f)(\xi )\right\vert ^{2},
\end{equation*}%
then 
\begin{equation*}
\dsum_{i,j=q+1}^{N}\left\Vert \mathcal{F}(\partial _{ij}^{2}u_{\epsilon
})\right\Vert _{L^{2}(%
%TCIMACRO{\U{211d} }%
%BeginExpansion
\mathbb{R}
%EndExpansion
^{N})}^{2}\leq \left\Vert \mathcal{F}(f)\right\Vert _{L^{2}(%
%TCIMACRO{\U{211d} }%
%BeginExpansion
\mathbb{R}
%EndExpansion
^{N})}^{2},
\end{equation*}%
and the Parseval identity gives%
\begin{equation*}
\dsum_{i,j=q+1}^{N}\left\Vert \partial _{ij}^{2}u_{\epsilon }\right\Vert
_{L^{2}(%
%TCIMACRO{\U{211d} }%
%BeginExpansion
\mathbb{R}
%EndExpansion
^{N})}^{2}\leq \left\Vert f\right\Vert _{L^{2}(%
%TCIMACRO{\U{211d} }%
%BeginExpansion
\mathbb{R}
%EndExpansion
^{N})}^{2}.
\end{equation*}%
Hence%
\begin{equation*}
\left\Vert \nabla _{X_{2}}^{2}u_{\epsilon }\right\Vert _{L^{2}(%
%TCIMACRO{\U{211d} }%
%BeginExpansion
\mathbb{R}
%EndExpansion
^{N})}\leq \left\Vert f\right\Vert _{L^{2}(%
%TCIMACRO{\U{211d} }%
%BeginExpansion
\mathbb{R}
%EndExpansion
^{N})}.
\end{equation*}

Similarly we obtain from (\ref{4}) the bounds 
\begin{equation*}
\epsilon ^{2}\left\Vert \nabla _{X_{1}}^{2}u_{\epsilon }\right\Vert _{L^{2}(%
%TCIMACRO{\U{211d} }%
%BeginExpansion
\mathbb{R}
%EndExpansion
^{N})}\leq \left\Vert f\right\Vert _{L^{2}(%
%TCIMACRO{\U{211d} }%
%BeginExpansion
\mathbb{R}
%EndExpansion
^{N})},
\end{equation*}%
\begin{equation*}
\sqrt{2}\epsilon \left\Vert \nabla _{X_{1}X_{2}}^{2}u_{\epsilon }\right\Vert
_{L^{2}(%
%TCIMACRO{\U{211d} }%
%BeginExpansion
\mathbb{R}
%EndExpansion
^{N})}\leq \left\Vert f\right\Vert _{L^{2}(%
%TCIMACRO{\U{211d} }%
%BeginExpansion
\mathbb{R}
%EndExpansion
^{N})}.
\end{equation*}
\end{proof}

\begin{notation}
For a function $u\in L^{p}(%
%TCIMACRO{\U{211d} }%
%BeginExpansion
\mathbb{R}
%EndExpansion
^{N})$ and $h\in 
%TCIMACRO{\U{211d} }%
%BeginExpansion
\mathbb{R}
%EndExpansion
^{N}$ we denote $\tau _{h}u(x)=u(x+h),$ $x\in 
%TCIMACRO{\U{211d} }%
%BeginExpansion
\mathbb{R}
%EndExpansion
^{N}.$
\end{notation}

\begin{lemma}
Let $\Omega $ be an open bounded subset of $%
%TCIMACRO{\U{211d} }%
%BeginExpansion
\mathbb{R}
%EndExpansion
^{N}$ and let $(u_{k})_{k\in 
%TCIMACRO{\U{2115} }%
%BeginExpansion
\mathbb{N}
%EndExpansion
}$ be a converging sequence in $L^{p}(\Omega )$,$1\leq p<\infty $ and let $%
\omega \subset \subset \Omega $ open, then for every $\sigma >0$ there
exists $0<\delta <dist(\partial \Omega ,\omega )$ such that 
\begin{equation*}
\forall h\in 
%TCIMACRO{\U{211d} }%
%BeginExpansion
\mathbb{R}
%EndExpansion
^{N},\left\vert h\right\vert \leq \delta ,\forall k\in 
%TCIMACRO{\U{2115} }%
%BeginExpansion
\mathbb{N}
%EndExpansion
:\left\Vert \tau _{h}u_{k}-u_{k}\right\Vert _{L^{p}(\omega )}\leq \sigma
\end{equation*}%
in other words we have $\underset{h\rightarrow 0}{\lim }$ $\underset{k\in 
%TCIMACRO{\U{2115} }%
%BeginExpansion
\mathbb{N}
%EndExpansion
}{\sup }\left\Vert \tau _{h}u_{k}-u_{k}\right\Vert _{L^{p}(\omega )}=0.$
\end{lemma}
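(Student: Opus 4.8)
The plan is to combine the classical continuity of translations in $L^{p}$ with an $\varepsilon/3$ argument that isolates the finitely many ``early'' terms of the sequence. Write $u$ for the $L^{p}(\Omega )$-limit of $(u_{k})$ and fix $\sigma >0$. Since $\omega \subset \subset \Omega $, the number $d:=\mathrm{dist}(\partial \Omega ,\omega )$ is strictly positive, and a connectedness argument shows that whenever $|h|<d$ one has $\omega +h\subset \Omega $; consequently $\tau _{h}v$ is well defined on $\omega $ for every $v\in L^{p}(\Omega )$ and, by the change of variables $y=x+h$,
\begin{equation*}
\left\Vert \tau _{h}v\right\Vert _{L^{p}(\omega )}\leq \left\Vert v\right\Vert _{L^{p}(\Omega )}\text{ \ for all }|h|<d.
\end{equation*}
This last inequality is the device that lets a single $L^{p}(\Omega )$-bound control all the translates simultaneously.

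First I would record the single-function statement: for any fixed $v\in L^{p}(\Omega )$ one has $\left\Vert \tau _{h}v-v\right\Vert _{L^{p}(\omega )}\rightarrow 0$ as $h\rightarrow 0$. This is the standard continuity of translation in $L^{p}(\mathbb{R}^{N})$ (valid for $1\leq p<\infty $, proved by approximating $v$ by a continuous compactly supported function and extending by zero), restricted to $\omega $ and to $|h|<d$ so that all quantities make sense.

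Next, choose $K\in \mathbb{N}$ with $\left\Vert u_{k}-u\right\Vert _{L^{p}(\Omega )}<\sigma /3$ for all $k\geq K$. For such $k$ and for $|h|<d$, the triangle inequality gives
\begin{equation*}
\left\Vert \tau _{h}u_{k}-u_{k}\right\Vert _{L^{p}(\omega )}\leq \left\Vert \tau _{h}(u_{k}-u)\right\Vert _{L^{p}(\omega )}+\left\Vert \tau _{h}u-u\right\Vert _{L^{p}(\omega )}+\left\Vert u-u_{k}\right\Vert _{L^{p}(\omega )},
\end{equation*}
where the first and third terms are each $<\sigma /3$ by the displayed translate bound, and the middle term is $<\sigma /3$ as soon as $|h|\leq \delta _{0}$ for some $\delta _{0}>0$ furnished by the single-function statement applied to $v=u$. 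For the remaining finitely many indices $k\in \{0,\dots ,K-1\}$, apply the single-function statement to each $v=u_{k}$ to obtain $\delta _{k}>0$ with $\left\Vert \tau _{h}u_{k}-u_{k}\right\Vert _{L^{p}(\omega )}\leq \sigma $ for $|h|\leq \delta _{k}$. Setting $\delta :=\min \{\tfrac{d}{2},\delta _{0},\delta _{1},\dots ,\delta _{K-1}\}$ then gives the claim for all $k$ at once.

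The only genuinely delicate point is the bookkeeping of domains: one must keep $|h|$ strictly below $d$ throughout, so that every translate appearing lives over $\omega $ and can be dominated by the full $L^{p}(\Omega )$-norm. Beyond that the argument is the routine $\varepsilon /3$ split, the essential observation being that uniformity in $k$ costs nothing because all but finitely many terms fall into the ``tail'' governed by the limit $u$.
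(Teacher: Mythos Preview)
Your proof is correct and follows essentially the same approach as the paper: both use continuity of translation in $L^{p}$ for a single function, split the sequence into a tail controlled by the limit $u$ via the triangle inequality and finitely many initial terms handled individually, and take the minimum of the resulting $\delta$'s. The only cosmetic differences are that the paper extends functions by zero to $\mathbb{R}^{N}$ rather than tracking $\omega+h\subset\Omega$ directly, and uses a $\sigma/2$--$\sigma/4$ split instead of your $\sigma/3$.
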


\begin{proof}
Let $\omega \subset \subset \Omega $ open. For a function $v\in L^{p}(\Omega
),$ extend $v$ by $0$ outside of $\Omega ,$ since the translation $%
h\rightarrow \tau _{h}v$ is continuous from $%
%TCIMACRO{\U{211d} }%
%BeginExpansion
\mathbb{R}
%EndExpansion
^{N}$ to $L^{p}(%
%TCIMACRO{\U{211d} }%
%BeginExpansion
\mathbb{R}
%EndExpansion
^{N})$ (see for instance \cite{vokhac}) then for every $\sigma >0$ there
exists $0<\delta <dist(\partial \Omega ,\omega )$ such that 
\begin{equation}
\forall h\in 
%TCIMACRO{\U{211d} }%
%BeginExpansion
\mathbb{R}
%EndExpansion
^{N},\left\vert h\right\vert \leq \delta :\left\Vert \tau _{h}v-v\right\Vert
_{L^{p}(\omega )}\leq \sigma .  \label{5}
\end{equation}

We denote $\lim u_{k}=u\in L^{p}(\Omega )$, and let $\sigma >0$ then (\ref{5}%
) shows that there exists $0<\delta <dist(\partial \Omega ,\omega )$ such
that%
\begin{equation*}
\forall h\in 
%TCIMACRO{\U{211d} }%
%BeginExpansion
\mathbb{R}
%EndExpansion
^{N},\left\vert h\right\vert \leq \delta :\left\Vert \tau _{h}u-u\right\Vert
_{L^{p}(\omega )}\leq \frac{\sigma }{2}.
\end{equation*}%
By the triangular inequality and the invariance of the Lebesgue measure
under translations we have for every $k\in 
%TCIMACRO{\U{2115} }%
%BeginExpansion
\mathbb{N}
%EndExpansion
$ and $\left\vert h\right\vert \leq \delta $ 
\begin{equation}
\left\Vert \tau _{h}u_{k}-u_{k}\right\Vert _{L^{p}(\omega )}\leq 2\left\Vert
u_{k}-u\right\Vert _{L^{p}(\Omega )}+\left\Vert \tau _{h}u-u\right\Vert
_{L^{p}(\omega )}  \label{6}
\end{equation}%
Since $u_{k}\rightarrow u$ in $L^{p}(\Omega )$ then there exists $k_{0}\in 
%TCIMACRO{\U{2115} }%
%BeginExpansion
\mathbb{N}
%EndExpansion
$, such that 
\begin{equation*}
\forall k\geq k_{0}:\left\Vert u_{k}-u\right\Vert _{L^{p}(\Omega )}\leq 
\frac{\sigma }{4}.
\end{equation*}%
Then from (\ref{6}) we obtain 
\begin{equation}
\forall h\in 
%TCIMACRO{\U{211d} }%
%BeginExpansion
\mathbb{R}
%EndExpansion
^{N},\left\vert h\right\vert \leq \delta ,\forall k\geq k_{0}:\left\Vert
\tau _{h}u_{k}-u_{k}\right\Vert _{L^{p}(\omega )}\leq \sigma  \label{7}
\end{equation}

Similarly (\ref{5}) shows that for every $k\in \left\{
0,1,2,...,k_{0}-1\right\} $ there exists $0<\delta _{k}<dist(\partial \Omega
,\omega )$ such that 
\begin{equation}
\forall h\in 
%TCIMACRO{\U{211d} }%
%BeginExpansion
\mathbb{R}
%EndExpansion
^{N},\left\vert h\right\vert \leq \delta _{k}:\left\Vert \tau
_{h}u_{k}-u_{k}\right\Vert _{L^{p}(\omega )}\leq \sigma ,\text{ }k\in
\left\{ 0,1,2,...,k_{0}-1\right\}  \label{8}
\end{equation}%
Taking $\delta ^{\prime }=\underset{k\in \left\{ 0,..,k_{0}-1\right\} }{\min 
}(\delta _{k},\delta )$ and combining (\ref{7}) and (\ref{8}) we obtain 
\begin{equation*}
\forall h\in 
%TCIMACRO{\U{211d} }%
%BeginExpansion
\mathbb{R}
%EndExpansion
^{N},\left\vert h\right\vert \leq \delta ^{\prime },\forall k\in 
%TCIMACRO{\U{2115} }%
%BeginExpansion
\mathbb{N}
%EndExpansion
:\left\Vert \tau _{h}u_{k}-u_{k}\right\Vert _{L^{p}(\omega )}\leq \sigma .
\end{equation*}
\end{proof}

\section{\protect\bigskip The perturbed Laplace equation}

In this section we will prove \textbf{Theorem 1} for the perturbed Laplace
equation. We suppose that $A=Id$, and let $u_{\epsilon }\in
W_{0}^{1,2}(\Omega )$ be the unique solution to 
\begin{equation}
\left\{ 
\begin{array}{cc}
-\epsilon ^{2}\Delta _{X_{1}}u_{\epsilon }-\Delta _{X_{2}}u_{\epsilon }=f & 
\\ 
u_{\epsilon }\in W_{0}^{1,2}(\Omega ).\text{\ \ \ \ \ \ \ \ \ \ \ \ \ } & 
\end{array}%
\right.  \label{9}
\end{equation}%
Notice that the elliptic regularity \cite{trudinger} shows that $u_{\epsilon
}\in W_{loc}^{2,2}(\Omega )$. Now, let $(\epsilon _{k})_{k\in 
%TCIMACRO{\U{2115} }%
%BeginExpansion
\mathbb{N}
%EndExpansion
}$ be a sequence in $(0,1]$ with $\lim \epsilon _{k}=0,$ and let $%
u_{k}=u_{\epsilon _{k}}$ be the solution of (\ref{9}) with $\epsilon $
replaced by $\epsilon _{k}$. then one can prove the following

\begin{proposition}
1) Let $\omega \subset \subset \Omega $ open then 
\begin{equation*}
\begin{array}{cc}
\underset{h\rightarrow 0}{\lim \text{ }}\underset{k\in 
%TCIMACRO{\U{2115} }%
%BeginExpansion
\mathbb{N}
%EndExpansion
}{\sup }\left\Vert \tau _{h}\nabla _{X_{2}}^{2}u_{k}-\nabla
_{X_{2}}^{2}u_{k}\right\Vert _{L^{2}(\omega )}=0,\text{ \ \ \ \ \ \ \ \ \ \ }
& \text{ \ \ \ \ \ \ } \\ 
\underset{h\rightarrow 0}{\lim \text{ }}\underset{k\in 
%TCIMACRO{\U{2115} }%
%BeginExpansion
\mathbb{N}
%EndExpansion
}{\sup }\left\Vert \epsilon _{k}^{2}(\tau _{h}\nabla
_{X_{1}}^{2}u_{k}-\nabla _{X_{1}}^{2}u_{k})\right\Vert _{L^{2}(\omega )}=0,%
\text{ \ \ \ \ } &  \\ 
\underset{h\rightarrow 0}{\lim \text{ }}\underset{k\in 
%TCIMACRO{\U{2115} }%
%BeginExpansion
\mathbb{N}
%EndExpansion
}{\sup }\left\Vert \epsilon _{k}(\tau _{h}\nabla
_{X_{1}X_{2}}^{2}u_{k}-\nabla _{X_{1}X_{2}}^{2}u_{k})\right\Vert
_{L^{2}(\omega )}=0. & 
\end{array}%
\end{equation*}

2) The sequences $\left( \nabla _{X_{2}}^{2}u_{k}\right) $, $(\epsilon
_{k}^{2}\nabla _{X_{1}}^{2}u_{k})$, $(\epsilon _{k}\nabla
_{X_{1}X_{2}}^{2}u_{k})$ are bounded in $L_{loc}^{2}(\Omega )$ i.e. for
every $\omega \subset \subset \Omega $ open there exists $M\geq 0$ such that%
\begin{equation*}
\underset{k\in 
%TCIMACRO{\U{2115} }%
%BeginExpansion
\mathbb{N}
%EndExpansion
}{\sup }\left\Vert \epsilon _{k}^{2}\nabla _{X_{1}}^{2}u_{k}\right\Vert
_{L^{2}(\omega )},\underset{k\in 
%TCIMACRO{\U{2115} }%
%BeginExpansion
\mathbb{N}
%EndExpansion
}{\sup }\left\Vert \nabla _{X_{2}}^{2}u_{k}\right\Vert _{L^{2}(\omega )},%
\underset{k\in 
%TCIMACRO{\U{2115} }%
%BeginExpansion
\mathbb{N}
%EndExpansion
}{\sup }\left\Vert \epsilon _{k}\nabla _{X_{1}X_{2}}^{2}u_{k}\right\Vert
_{L^{2}(\omega )}\leq M.
\end{equation*}
\end{proposition}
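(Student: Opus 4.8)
The strategy is to reduce both claims to the whole-space estimates of Lemma 2, applied to suitable localizations of $u_k$. Fix $\omega\subset\subset\Omega$ open and choose a cutoff $\varphi\in C_c^\infty(\Omega)$ with $0\le\varphi\le1$ and $\varphi\equiv1$ on a neighborhood $\omega'$ of $\overline{\omega}$, with $\omega\subset\subset\omega'\subset\subset\Omega$. Since $u_k\in W^{2,2}_{loc}(\Omega)$ by elliptic regularity, the product $v_k:=\varphi u_k$ belongs to $W^{2,2}(\mathbb{R}^N)$ (extended by zero) and satisfies, in $\mathbb{R}^N$,
\begin{equation*}
-\epsilon_k^2\Delta_{X_1}v_k-\Delta_{X_2}v_k=g_k,
\end{equation*}
where $g_k=\varphi f-\epsilon_k^2\big(2\nabla_{X_1}\varphi\cdot\nabla_{X_1}u_k+u_k\Delta_{X_1}\varphi\big)-\big(2\nabla_{X_2}\varphi\cdot\nabla_{X_2}u_k+u_k\Delta_{X_2}\varphi\big)$. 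The point of inserting $\varphi$ this way is that all commutator terms are controlled: $\|u_k\|_{L^2(\Omega)}$, $\|\nabla_{X_2}u_k\|_{L^2(\Omega)}$ are bounded by the basic energy estimate $\|u_k\|_{W_0^{1,2}}\le C\|f\|_{L^2(\Omega)}$ (from Lax--Milgram with coercivity constant $\min(\epsilon_k^2,1)$... actually here one must be slightly careful and use the $V^{1,2}$ bound from \eqref{19} together with the elementary bound $\|\epsilon_k\nabla_{X_1}u_k\|_{L^2}\le C\|f\|$), and crucially $\epsilon_k^2\nabla_{X_1}\varphi\cdot\nabla_{X_1}u_k=\nabla_{X_1}\varphi\cdot(\epsilon_k\,\epsilon_k\nabla_{X_1}u_k)$ is bounded in $L^2$ since $\epsilon_k\nabla_{X_1}u_k$ is bounded in $L^2(\Omega)$. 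Hence $(g_k)$ is bounded in $L^2(\mathbb{R}^N)$, say $\|g_k\|_{L^2(\mathbb{R}^N)}\le M_0$ uniformly in $k$.

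For part (2): applying Lemma 2 to $v_k$ gives $\|\nabla_{X_2}^2 v_k\|_{L^2(\mathbb{R}^N)}\le\|g_k\|\le M_0$, and similarly $\epsilon_k^2\|\nabla_{X_1}^2 v_k\|_{L^2}\le M_0$ and $\epsilon_k\|\nabla_{X_1X_2}^2 v_k\|_{L^2}\le M_0$. Since $\varphi\equiv1$ on $\omega$, we have $\nabla_{X_2}^2 v_k=\nabla_{X_2}^2 u_k$ a.e.\ on $\omega$, and likewise for the other two Hessian blocks, so restricting to $\omega$ yields the three uniform bounds claimed, with $M=M_0$. This part is routine once the commutator bookkeeping above is in place.

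For part (1): the idea is to commute $\tau_h$ with the localized equation. For $h\in\mathbb{R}^N$ small, $\tau_h v_k$ solves $-\epsilon_k^2\Delta_{X_1}(\tau_h v_k)-\Delta_{X_2}(\tau_h v_k)=\tau_h g_k$, so $w_k:=\tau_h v_k-v_k\in W^{2,2}(\mathbb{R}^N)$ solves the same equation with right-hand side $\tau_h g_k-g_k$. Lemma 2 applied to $w_k$ gives
\begin{equation*}
\|\nabla_{X_2}^2 w_k\|_{L^2(\mathbb{R}^N)}\le\|\tau_h g_k-g_k\|_{L^2(\mathbb{R}^N)},\quad \epsilon_k^2\|\nabla_{X_1}^2 w_k\|_{L^2}\le\|\tau_h g_k-g_k\|,\quad \epsilon_k\|\nabla_{X_1X_2}^2 w_k\|_{L^2}\le\tfrac{1}{\sqrt2}\|\tau_h g_k-g_k\|.
\end{equation*}
On the set $\{x:x\in\omega,\ x+h\in\omega'\}$, which contains $\omega$ once $|h|<\mathrm{dist}(\omega,\partial\omega')$, one has $\nabla_{X_2}^2 w_k=\tau_h\nabla_{X_2}^2 u_k-\nabla_{X_2}^2 u_k$, so it suffices to show $\sup_k\|\tau_h g_k-g_k\|_{L^2(\mathbb{R}^N)}\to0$ as $h\to0$. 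Now $g_k$ is a finite sum of terms of the form $\psi\cdot z_k$ with $\psi\in C_c^\infty$ fixed and $z_k$ ranging over the finite list $\{f,\ \epsilon_k\nabla_{X_1}u_k,\ \nabla_{X_2}u_k,\ \epsilon_k u_k,\ u_k\}$ (all restricted to a fixed compact set). By \eqref{19} the sequences $(\nabla_{X_2}u_k)$ and $(u_k)$ converge in $L^2(\Omega)$, and $(\epsilon_k\nabla_{X_1}u_k)$, $(\epsilon_k u_k)$ converge (to $0$) in $L^2(\Omega)$; and $(f)$ is a constant sequence. Thus each $(z_k)$ is a \emph{converging} sequence in $L^2$ of a bounded open set, and Lemma 3 (the equi-translation-continuity lemma) applies to each, giving $\sup_k\|\tau_h(\psi z_k)-\psi z_k\|_{L^2}\to0$ by splitting $\tau_h(\psi z_k)-\psi z_k=(\tau_h\psi-\psi)\tau_h z_k+\psi(\tau_h z_k-z_k)$ and using uniform continuity of $\psi$ together with the uniform $L^2$ bound on $(z_k)$ for the first term and Lemma 3 for the second. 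Summing the finitely many terms finishes part (1).

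**Main obstacle.** The delicate point is not the Fourier estimates of Lemma 2 — those are a black box — but making sure every term produced by localization is \emph{uniformly} controlled in $k$ and, for part (1), belongs to a \emph{convergent} (not merely bounded) sequence so that Lemma 3 applies. The role of the anisotropic scaling is essential here: the dangerous commutator $\epsilon_k^2\nabla_{X_1}\varphi\cdot\nabla_{X_1}u_k$ would only be bounded, not small or convergent, if we had $\nabla_{X_1}u_k$ alone; it is the extra factor $\epsilon_k$ (giving $\epsilon_k\nabla_{X_1}u_k\to0$ from \eqref{19}) that rescues it. One should also double-check that the zero-extension of $\varphi u_k$ across $\partial\Omega$ genuinely lands in $W^{2,2}(\mathbb{R}^N)$ — this is fine because $\mathrm{supp}\,\varphi\subset\subset\Omega$ and $u_k\in W^{2,2}_{loc}(\Omega)$, so $\varphi u_k\in W^{2,2}$ with compact support.
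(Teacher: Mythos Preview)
Your argument is correct and matches the paper's approach: localize by a cutoff, invoke the whole-space Fourier estimates (the paper's Lemma~1 --- your numbering is shifted by one), and control the right-hand side via the equi-translation lemma (the paper's Lemma~2) applied to the convergent sequences $u_k$, $\nabla_{X_2}u_k$, $\epsilon_k\nabla_{X_1}u_k$ coming from (\ref{19}). The only difference is cosmetic: for part~(1) the paper first forms $U_k^h=\tau_h u_k-u_k$ and \emph{then} multiplies by the cutoff $\rho$, whereas you cut off first and translate afterwards, which forces the extra product-rule split $\tau_h(\psi z_k)-\psi z_k=(\tau_h\psi-\psi)\tau_h z_k+\psi(\tau_h z_k-z_k)$ but leads to the same estimate.
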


\begin{proof}
1) Let $\omega \subset \subset \Omega $ open$,$ then one can choose $\omega
^{\prime }$ open such that $\omega \subset \subset \omega ^{\prime }\subset
\subset \Omega ,$ let $\rho \in \mathcal{D}(%
%TCIMACRO{\U{211d} }%
%BeginExpansion
\mathbb{R}
%EndExpansion
^{N})$ with $\rho =1$ on $\omega $, $0\leq \rho \leq 1$ and $Supp(\rho
)\subset \omega ^{\prime }$. Let $0<h<dist(\omega ^{\prime },\partial \Omega
),$ to make the notations less heavy we set $U_{k}^{h}=\tau _{h}u_{k}-u_{k}$%
, then $U_{k}^{h}\in W^{2,2}(\omega ^{\prime }).$ Notice that translation
and derivation commute then we have%
\begin{equation*}
-\epsilon _{k}^{2}\Delta _{X_{1}}U_{k}^{h}(x)-\Delta
_{X_{2}}U_{k}^{h}(x)=F^{h}(x)\text{, \ a.e }x\in \omega ^{\prime }\text{,}
\end{equation*}%
with $F^{h}=\tau _{h}f-f$.

We set $\mathcal{W}_{k}^{h}=\rho U_{k}^{h}$ then we get 
\begin{multline*}
-\epsilon _{k}^{2}\Delta _{X_{1}}\mathcal{W}_{k}^{h}(x)-\Delta _{X_{2}}%
\mathcal{W}_{k}^{h}(x)=\rho (x)F^{h}(x)-2\epsilon _{k}^{2}\nabla
_{X_{1}}\rho (x)\cdot \nabla _{X_{1}}U_{k}^{h}(x) \\
-2\nabla _{X_{2}}\rho (x)\cdot \nabla
_{X_{2}}U_{k}^{h}(x)-U_{k}^{h}(x)(\epsilon _{k}^{2}\Delta _{X_{1}}\rho
(x)-\Delta _{X_{2}}\rho (x)),
\end{multline*}%
for a.e $x\in \omega ^{\prime }$.

Since $U_{k}^{h}\in W^{2,2}(\omega ^{\prime })$ then $\mathcal{W}_{k}^{h}\in
W_{0}^{2,2}(\omega ^{\prime })$, so we can extend $\mathcal{W}_{k}^{h}$ by $%
0 $ outside of $\omega ^{\prime }$ then $\mathcal{W}_{k}^{h}\in W^{2}(%
%TCIMACRO{\U{211d} }%
%BeginExpansion
\mathbb{R}
%EndExpansion
^{N}).$ The right hand side of the above equality is extended by $0$ outside
of $\omega ^{\prime }$, hence the equation is satisfied in the whole space,
and thus by \textbf{Lemma 1} we get 
\begin{multline*}
\left\Vert \nabla _{X_{2}}^{2}\mathcal{W}_{k}^{h}\right\Vert _{L^{2}(%
%TCIMACRO{\U{211d} }%
%BeginExpansion
\mathbb{R}
%EndExpansion
^{N})}\leq \left\Vert \rho F^{h}\right\Vert _{L^{2}(%
%TCIMACRO{\U{211d} }%
%BeginExpansion
\mathbb{R}
%EndExpansion
^{N})}+2\epsilon _{k}^{2}\left\Vert \nabla _{X_{1}}\rho \cdot \nabla
_{X_{1}}U_{k}^{h}\right\Vert _{L^{2}(%
%TCIMACRO{\U{211d} }%
%BeginExpansion
\mathbb{R}
%EndExpansion
^{N})} \\
+2\left\Vert \nabla _{X_{2}}\rho \cdot \nabla _{X_{2}}U_{k}^{h}\right\Vert
_{L^{2}(%
%TCIMACRO{\U{211d} }%
%BeginExpansion
\mathbb{R}
%EndExpansion
^{N})}+\left\Vert U_{k}^{h}(\epsilon _{k}^{2}\Delta _{X_{1}}\rho -\Delta
_{X_{2}}\rho )\right\Vert _{L^{2}(%
%TCIMACRO{\U{211d} }%
%BeginExpansion
\mathbb{R}
%EndExpansion
^{N})}.
\end{multline*}%
Then 
\begin{multline*}
\left\Vert \nabla _{X_{2}}^{2}U_{k}^{h}\right\Vert _{L^{2}(\omega )}\leq
\left\Vert F^{h}\right\Vert _{L^{2}(\omega ^{\prime })}+2\epsilon
_{k}\left\Vert \nabla _{X_{1}}\rho \right\Vert _{\infty }\left\Vert \epsilon
_{k}\nabla _{X_{1}}U_{k}^{h}\right\Vert _{L^{2}(\omega ^{\prime })} \\
+2\left\Vert \nabla _{X_{2}}\rho \right\Vert _{\infty }\left\Vert \nabla
_{X_{2}}U_{k}^{h}\right\Vert _{L^{2}(\omega ^{\prime })}+\left\Vert
(\epsilon _{k}^{2}\Delta _{X_{1}}\rho -\Delta _{X_{2}}\rho )\right\Vert
_{\infty }\left\Vert U_{k}^{h}\right\Vert _{L^{2}(\omega ^{\prime })}.
\end{multline*}%
Notice that by (\ref{19}) we have $u_{k}\rightarrow u$ in $V^{1,2}$ and $%
\epsilon _{k}\nabla _{X_{1}}u_{k}\rightarrow 0$ in $L^{2}(\Omega )$, then by 
\textbf{Lemma 2} we deduce 
\begin{equation*}
\underset{h\rightarrow 0}{\lim \text{ }}\underset{k\in 
%TCIMACRO{\U{2115} }%
%BeginExpansion
\mathbb{N}
%EndExpansion
}{\sup }\left\Vert \epsilon _{k}\nabla _{X_{1}}U_{k}^{h}\right\Vert
_{L^{2}(\omega ^{\prime })}=\underset{h\rightarrow 0}{\lim \text{ }}\underset%
{k\in 
%TCIMACRO{\U{2115} }%
%BeginExpansion
\mathbb{N}
%EndExpansion
}{\sup }\left\Vert \epsilon _{k}(\tau _{h}\nabla _{X_{1}}u_{k}-\nabla
_{X_{1}}u_{k})\right\Vert _{L^{2}(\omega ^{\prime })}=0\text{,}
\end{equation*}%
and similarly we obtain%
\begin{eqnarray*}
\underset{h\rightarrow 0}{\lim \text{ }}\underset{k\in 
%TCIMACRO{\U{2115} }%
%BeginExpansion
\mathbb{N}
%EndExpansion
}{\sup }\left\Vert \nabla _{X_{2}}U_{k}^{h}\right\Vert _{L^{2}(\omega
^{\prime })} &=&0\text{, }\underset{h\rightarrow 0}{\lim \text{ }}\underset{%
k\in 
%TCIMACRO{\U{2115} }%
%BeginExpansion
\mathbb{N}
%EndExpansion
}{\sup }\left\Vert F^{h}\right\Vert _{L^{2}(\omega ^{\prime })}=0\text{,} \\
\underset{h\rightarrow 0}{\lim \text{ }}\underset{k\in 
%TCIMACRO{\U{2115} }%
%BeginExpansion
\mathbb{N}
%EndExpansion
}{\sup }\left\Vert U_{k}^{h}\right\Vert _{L^{2}(\omega ^{\prime })} &=&0.
\end{eqnarray*}%
and hence 
\begin{equation*}
\underset{h\rightarrow 0}{\lim \text{ }}\underset{k\in 
%TCIMACRO{\U{2115} }%
%BeginExpansion
\mathbb{N}
%EndExpansion
}{\sup }\left\Vert \tau _{h}\nabla _{X_{2}}^{2}u_{k}-\nabla
_{X_{2}}^{2}u_{k}\right\Vert _{L^{2}(\omega )}=\underset{h\rightarrow 0}{%
\lim \text{ }}\underset{k\in 
%TCIMACRO{\U{2115} }%
%BeginExpansion
\mathbb{N}
%EndExpansion
}{\sup }\left\Vert \nabla _{X_{2}}^{2}U_{k}^{h}\right\Vert _{L^{2}(\omega
)}=0.
\end{equation*}%
Similarly we obtain%
\begin{equation*}
\underset{h\rightarrow 0}{\lim \text{ }}\underset{k\in 
%TCIMACRO{\U{2115} }%
%BeginExpansion
\mathbb{N}
%EndExpansion
}{\sup }\left\Vert \epsilon _{k}^{2}(\tau _{h}\nabla
_{X_{1}}^{2}u_{k}-\nabla _{X_{1}}^{2}u_{k})\right\Vert _{L^{2}(\omega )}=0,
\end{equation*}%
and%
\begin{equation*}
\underset{h\rightarrow 0}{\lim \text{ }}\underset{k\in 
%TCIMACRO{\U{2115} }%
%BeginExpansion
\mathbb{N}
%EndExpansion
}{\sup }\left\Vert \epsilon _{k}(\tau _{h}\nabla
_{X_{1}X_{2}}^{2}u_{k}-\nabla _{X_{1}X_{2}}^{2}u_{k})\right\Vert
_{L^{2}(\omega )}=0.
\end{equation*}%
2) Following the same arguments, we get the estimation 
\begin{multline*}
\epsilon _{k}^{2}\left\Vert \nabla _{X_{1}}^{2}u_{k}\right\Vert
_{L^{2}(\omega )}+\left\Vert \nabla _{X_{2}}^{2}u_{k}\right\Vert
_{L^{2}(\omega )}+\sqrt{2}\epsilon _{k}\left\Vert \nabla
_{X_{1}X_{2}}^{2}u_{k}\right\Vert _{L^{2}(\omega )}\leq \\
3\left\Vert f\right\Vert _{L^{2}(\omega ^{\prime })}+6\epsilon
_{k}\left\Vert \nabla _{X_{1}}\rho \right\Vert _{\infty }\left\Vert \epsilon
_{k}\nabla _{X_{1}}u_{k}\right\Vert _{L^{2}(\omega ^{\prime })} \\
+6\left\Vert \nabla _{X_{2}}\rho \right\Vert _{\infty }\left\Vert \nabla
_{X_{2}}u_{k}\right\Vert _{L^{2}(\omega ^{\prime })}+3\left\Vert (\epsilon
_{k}^{2}\Delta _{X_{1}}\rho -\Delta _{X_{2}}\rho )\right\Vert _{\infty
}\left\Vert u_{k}\right\Vert _{L^{2}(\omega ^{\prime })}.
\end{multline*}%
The convergences $u_{k}\rightarrow u$ in $V^{1,2}$, $\epsilon _{k}\nabla
_{X_{1}}u_{k}\rightarrow 0$ in $L^{2}(\Omega )$ and boundedness of $\rho $
and its derivatives show that the right hand side of the above inequality is
uniformly bounded in $k$, i.e. for some $M\geq 0$ independent of $k$ we have 
\begin{equation*}
\epsilon _{k}^{2}\left\Vert \nabla _{X_{1}}^{2}u_{k}\right\Vert
_{L^{2}(\omega )}+\left\Vert \nabla _{X_{2}}^{2}u_{k}\right\Vert
_{L^{2}(\omega )}+\sqrt{2}\epsilon _{k}\left\Vert \nabla
_{X_{1}X_{2}}^{2}u_{k}\right\Vert _{L^{2}(\omega )}\leq M,\text{ }\forall
k\in 
%TCIMACRO{\U{2115} }%
%BeginExpansion
\mathbb{N}
%EndExpansion
,
\end{equation*}%
and therefore, the sequences $\left( \nabla _{X_{2}}^{2}u_{k}\right) $, $%
(\epsilon _{k}^{2}\nabla _{X_{1}}^{2}u_{k})$, $(\epsilon _{k}\nabla
_{X_{1}X_{2}}^{2}u_{k})$ are bounded in $L_{loc}^{2}(\Omega ).$
\end{proof}

Now, we are ready to prove the following

\begin{theorem}
Let $u_{\epsilon }\in W_{0}^{1,2}(\Omega )\cap W_{loc}^{2,2}(\Omega )$ be
the solution of (\ref{9}) then $u_{\epsilon }\rightarrow u_{0}$ strongly in $%
V_{loc}^{2,2}$ where $u_{0}\in V_{loc}^{2,2}$ is the solution of the limit
problem. In addition, we have%
\begin{equation*}
\epsilon ^{2}\nabla _{X_{1}}^{2}u_{\epsilon }\rightarrow 0\text{ and }%
\epsilon \nabla _{X_{1}X_{2}}^{2}u_{\epsilon }\rightarrow 0,\text{ strongly
in }L_{loc}^{2}(\Omega ).
\end{equation*}
\end{theorem}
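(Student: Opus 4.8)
We want to prove the convergence $u_\epsilon \to u_0$ in $V^{2,2}_{loc}$ together with the two vanishing statements. By the Remark following Proposition~1, convergence in $V^{2,2}_{loc}$ amounts to $\|u_\epsilon - u_0\|^{\omega}_{2,2}\to 0$ for every $\omega\subset\subset\Omega$; since we already know from \eqref{19} that $u_\epsilon\to u_0$ in $V^{1,2}$, the only new content is
\[
\nabla^2_{X_2}u_\epsilon \to \nabla^2_{X_2}u_0 \text{ in } L^2(\omega),\qquad \epsilon^2\nabla^2_{X_1}u_\epsilon\to 0,\qquad \epsilon\nabla^2_{X_1X_2}u_\epsilon\to 0 \text{ in } L^2(\omega).
\]
A sequential argument suffices: it is enough to show that every sequence $\epsilon_k\to0$ has a subsequence along which these convergences hold (with the common limit $\nabla^2_{X_2}u_0$ in the first case), because the limit is then determined uniquely.

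\textbf{Step 1: extract a strongly convergent subsequence.} Fix a sequence $\epsilon_k\to 0$ and set $u_k=u_{\epsilon_k}$. By Proposition~1, part~2, the three sequences $(\nabla^2_{X_2}u_k)$, $(\epsilon_k^2\nabla^2_{X_1}u_k)$, $(\epsilon_k\nabla^2_{X_1X_2}u_k)$ are bounded in $L^2_{loc}(\Omega)$, and by part~1 they satisfy the uniform equicontinuity-of-translations condition $\lim_{h\to0}\sup_k\|\tau_h v_k - v_k\|_{L^2(\omega)}=0$ on every $\omega\subset\subset\Omega$. Picking an exhaustion $\omega_n\subset\subset\omega_{n+1}\subset\subset\Omega$, the Riesz--Fréchet--Kolmogorov theorem applies on each $\omega_n$: the families are relatively compact in $L^2(\omega_n)$. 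A diagonal extraction over $n$ yields a single subsequence (not relabelled) such that, for every $\omega\subset\subset\Omega$,
\[
\nabla^2_{X_2}u_k \to g \text{ in } L^2(\omega),\qquad \epsilon_k^2\nabla^2_{X_1}u_k\to g_1,\qquad \epsilon_k\nabla^2_{X_1X_2}u_k\to g_2 \text{ in } L^2(\omega),
\]
for some $g\in L^2_{loc}(\Omega)$ (matrix-valued) and $g_1,g_2\in L^2_{loc}$.

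\textbf{Step 2: identify the limits.} Since $u_k\to u_0$ in $L^2(\Omega)$, each entry $\partial^2_{ij}u_k\to \partial^2_{ij}u_0$ in $\mathcal{D}'(\Omega)$; by uniqueness of distributional limits, $g=\nabla^2_{X_2}u_0$, so in particular $u_0\in V^{2,2}_{loc}$. Likewise $\epsilon_k^2\nabla^2_{X_1}u_k\to 0$ and $\epsilon_k\nabla^2_{X_1X_2}u_k\to 0$ in $\mathcal{D}'(\Omega)$ (because $\epsilon_k^2,\epsilon_k\to 0$ while $u_k$ stays $L^2$-bounded locally, and the second derivative is continuous on $\mathcal{D}'$), forcing $g_1=0$, $g_2=0$. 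Thus along the subsequence the desired convergences hold with the correct limits.

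\textbf{Step 3: pass from subsequences to the full limit.} The limits $\nabla^2_{X_2}u_0$, $0$, $0$ are independent of the chosen subsequence. A standard argument (if a sequence does not converge to $L$, some subsequence stays bounded away from $L$, yet by Steps 1--2 it has a further subsequence converging to $L$, a contradiction) upgrades subsequential convergence to convergence of the whole family as $\epsilon\to 0$. Combined with \eqref{19}, this gives $\|u_\epsilon-u_0\|^\omega_{2,2}\to0$ for every $\omega$, i.e. $u_\epsilon\to u_0$ in $V^{2,2}_{loc}$, and $\epsilon^2\nabla^2_{X_1}u_\epsilon\to0$, $\epsilon\nabla^2_{X_1X_2}u_\epsilon\to0$ in $L^2_{loc}(\Omega)$.

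\textbf{Main obstacle.} All the analytic difficulty has been front-loaded into Proposition~1: the uniform (in $k$) control of the translates of the second derivatives on compact subsets, obtained there via the cutoff $\rho$, Lemma~1, and Lemma~2. Given that, the present theorem is essentially a bookkeeping argument; the only point requiring care is the diagonalization ensuring a \emph{single} subsequence works simultaneously on all $\omega\subset\subset\Omega$ and for all three sequences, and checking that the Riesz--Fréchet--Kolmogorov hypotheses (uniform $L^2$-bound plus uniform smallness of translates, after extending by zero) are exactly what Proposition~1 supplies.
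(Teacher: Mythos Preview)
Your argument is correct and follows the same route as the paper: Riesz--Fr\'echet--Kolmogorov compactness fed by the translate and boundedness estimates, diagonal extraction over an exhaustion, identification of limits via the continuity of $\partial_{ij}^2$ on $\mathcal{D}'$, and the standard subsequence-to-full-sequence contradiction. One labeling slip: the bounds and equicontinuity you invoke come from \textbf{Proposition~2} (the one proved for the perturbed Laplacian), not Proposition~1, which is the Fr\'echet-space statement.
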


\begin{proof}
Let $u_{0}\in V^{1,2}$ be the solution of the limit problem and let $%
(u_{k})_{k\in 
%TCIMACRO{\U{2115} }%
%BeginExpansion
\mathbb{N}
%EndExpansion
},$ $u_{k}=u_{\epsilon _{k}}\in W_{0}^{1,2}(\Omega )\cap
W_{loc}^{2,2}(\Omega )$ be a sequence of solutions to (\ref{9}) with $%
\epsilon $ replaced by $\epsilon _{k}$. Then \textbf{Proposition 2} shows
that the hypothesis of the Riesz-Fr\'{e}chet-Kolmogorov theorem are
fulfilled (For the statement of the theorem, see for instance \cite{ellipt}%
). Whence, it follows that $\left\{ \nabla _{X_{2}}^{2}u_{k}\right\} _{k\in 
%TCIMACRO{\U{2115} }%
%BeginExpansion
\mathbb{N}
%EndExpansion
}$ is relatively compact in $L^{2}(\omega )$ for every $\omega \subset
\subset \Omega $ open. Now, for $\omega \subset \subset \Omega $ fixed there
exists $u_{0}^{\omega }\in L^{2}(\omega )$ and a subsequence still labeled $%
(\nabla _{X_{2}}^{2}u_{k})_{k\in 
%TCIMACRO{\U{2115} }%
%BeginExpansion
\mathbb{N}
%EndExpansion
}$ such that $\nabla _{X_{2}}^{2}u_{k}\rightarrow u_{0}^{\omega }$ in $%
L^{2}(\omega )$ strongly. Since $u_{k}\rightarrow u_{0}$ in $L^{2}(\omega )$
and the second order differential operators $\partial _{ij}^{2}$ are
continuous on $\mathcal{D}^{\prime }(\omega )$ then $u_{0}^{\omega }=\nabla
_{X_{2}}u_{0}$ on $\omega .$ Whence, since $\omega $ is arbitrary we get $%
\nabla _{X_{2}}^{2}u_{0}\in L_{loc}^{2}(\Omega )$, i.e. $u_{0}\in
V_{loc}^{2,2}.$

Now, Let $(\omega _{n})$ be a countable covering of $\Omega $ with $\omega
_{n}\subset \subset \Omega ,$ $\omega _{n}\subset \omega _{n+1}$,$\forall
n\in 
%TCIMACRO{\U{2115} }%
%BeginExpansion
\mathbb{N}
%EndExpansion
.$ Then by the diagonal process one can construct a subsequence still
labeled $(u_{k})$ such that 
\begin{equation*}
\nabla _{X_{2}}^{2}u_{k}\rightarrow \nabla _{X_{2}}^{2}u_{0}\text{ in }%
L_{loc}^{2}(\Omega )\text{ strongly.}
\end{equation*}%
Combining this with the convergence $u_{k}\rightarrow u_{0}$ of (\ref{19})
we get%
\begin{equation*}
u_{k}\rightarrow u_{0}\text{ strongly in }V_{loc}^{2,2},\text{ i.e. }%
d(u_{k},u_{0})\rightarrow 0\text{ as }k\rightarrow \infty ,
\end{equation*}%
where $d$ is the distance of the Fr\'{e}chet space $V_{loc}^{2,2}.$

To prove the convergence of the whole sequence $(u_{\epsilon })_{0<\epsilon
\leq 1}$ we can reason by contradiction. Suppose that there exists $\delta >0
$ and a subsequence $(u_{k})$ such that $d(u_{k},u_{0})>\delta $. It follows
by the first part of this proof that there exists a subsequence still
labeled $(u_{k})$ such that $d(u_{k},u_{0})\rightarrow 0$, which is a
contradiction..

By using the same arguments we can show easily ( see the end of subsection
4.1) that 
\begin{equation*}
\epsilon ^{2}\nabla _{X_{1}}^{2}u_{\epsilon }\rightarrow 0\text{ and }%
\epsilon \nabla _{X_{1}X_{2}}^{2}u_{\epsilon }\rightarrow 0\text{ strongly
in }L_{loc}^{2}(\Omega ).
\end{equation*}
\end{proof}

\section{\protect\bigskip General elliptic problems}

\subsection{Proof of the main theorem}

In this subsection we shall prove \textbf{Theorem 1}. Firstly, we suppose
that the coefficients of $A$ are constants then we have the following

\begin{proposition}
Suppose that the coefficients of $A$ are constants and assume (\ref{2})$,$
let $(u_{\epsilon })_{0<\epsilon \leq 1}$ be a sequence in $W^{2,2}(%
%TCIMACRO{\U{211d} }%
%BeginExpansion
\mathbb{R}
%EndExpansion
^{N})$ such that $-\dsum\limits_{i,j}a_{ij}^{\epsilon }\partial
_{ij}^{2}u_{\epsilon }=f$, with $f\in L^{2}(%
%TCIMACRO{\U{211d} }%
%BeginExpansion
\mathbb{R}
%EndExpansion
^{N})$ then we have for every $\epsilon \in (0,1]:$%
\begin{eqnarray*}
\lambda \left\Vert \nabla _{X_{2}}^{2}u_{\epsilon }\right\Vert _{L^{2}(%
%TCIMACRO{\U{211d} }%
%BeginExpansion
\mathbb{R}
%EndExpansion
^{N})} &\leq &\left\Vert f\right\Vert _{L^{2}(%
%TCIMACRO{\U{211d} }%
%BeginExpansion
\mathbb{R}
%EndExpansion
^{N})}, \\
\lambda \epsilon ^{2}\left\Vert \nabla _{X_{1}}^{2}u_{\epsilon }\right\Vert
_{L^{2}(%
%TCIMACRO{\U{211d} }%
%BeginExpansion
\mathbb{R}
%EndExpansion
^{N})} &\leq &\left\Vert f\right\Vert _{L^{2}(%
%TCIMACRO{\U{211d} }%
%BeginExpansion
\mathbb{R}
%EndExpansion
^{N})}, \\
\sqrt{2}\lambda \epsilon \left\Vert \nabla _{X_{1}X_{2}}^{2}u\right\Vert
_{L^{2}(%
%TCIMACRO{\U{211d} }%
%BeginExpansion
\mathbb{R}
%EndExpansion
^{N})} &\leq &\left\Vert f\right\Vert _{L^{2}(%
%TCIMACRO{\U{211d} }%
%BeginExpansion
\mathbb{R}
%EndExpansion
^{N})}.
\end{eqnarray*}
\end{proposition}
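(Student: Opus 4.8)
The plan is to mimic the Fourier-analytic argument of Lemma 1, but now carrying the ellipticity constant $\lambda$ through the computation. First I would apply the Fourier transform $\mathcal{F}$ to the equation $-\sum_{i,j}a_{ij}^{\epsilon}\partial_{ij}^{2}u_{\epsilon}=f$. Writing the symbol of the operator as $m_{\epsilon}(\xi)=\sum_{i,j}a_{ij}^{\epsilon}\xi_{i}\xi_{j}$, and using the block structure $A_{\epsilon}$, one gets $m_{\epsilon}(\xi)=A_{\epsilon}\xi\cdot\xi$. The key observation is that $A_{\epsilon}\xi\cdot\xi = A\eta\cdot\eta$ where $\eta=(\epsilon X_{1}\text{-part},\ X_{2}\text{-part})$, i.e. $\eta_{i}=\epsilon\xi_{i}$ for $i\le q$ and $\eta_{i}=\xi_{i}$ for $i>q$; this is immediate from the definition of the entries $a_{ij}^{\epsilon}$. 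By the coercivity assumption (\ref{2}) applied to the constant matrix $A$, we obtain $m_{\epsilon}(\xi)=A\eta\cdot\eta\ge\lambda|\eta|^{2}=\lambda\big(\epsilon^{2}\sum_{i=1}^{q}\xi_{i}^{2}+\sum_{i=q+1}^{N}\xi_{i}^{2}\big)$.

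Next, from $m_{\epsilon}(\xi)\mathcal{F}(u_{\epsilon})(\xi)=\mathcal{F}(f)(\xi)$ and the lower bound just established, I would write $\lambda\big(\epsilon^{2}\sum_{i=1}^{q}\xi_{i}^{2}+\sum_{i=q+1}^{N}\xi_{i}^{2}\big)|\mathcal{F}(u_{\epsilon})(\xi)|\le|\mathcal{F}(f)(\xi)|$ pointwise a.e. Squaring, this gives exactly the analogue of inequality (\ref{4}) with an extra factor $\lambda^{2}$ on the left:
\begin{equation*}
\lambda^{2}\Big(\epsilon^{4}\!\!\dsum_{i,j=1}^{q}\!\xi_{i}^{2}\xi_{j}^{2}+\!\!\dsum_{i,j=q+1}^{N}\!\!\xi_{i}^{2}\xi_{j}^{2}+2\epsilon^{2}\!\!\dsum_{j=q+1}^{N}\dsum_{i=1}^{q}\!\xi_{i}^{2}\xi_{j}^{2}\Big)|\mathcal{F}(u_{\epsilon})(\xi)|^{2}\le|\mathcal{F}(f)(\xi)|^{2}.
\end{equation*}
From here the three desired bounds follow exactly as in Lemma 1: dropping the non-negative terms to isolate $\sum_{i,j=q+1}^{N}\xi_{i}^{2}\xi_{j}^{2}$, then $\epsilon^{4}\sum_{i,j=1}^{q}\xi_{i}^{2}\xi_{j}^{2}$, then $2\epsilon^{2}\sum_{i=1}^{q}\sum_{j=q+1}^{N}\xi_{i}^{2}\xi_{j}^{2}$; recognizing $\xi_{i}\xi_{j}\mathcal{F}(u_{\epsilon})=\pm\mathcal{F}(\partial_{ij}^{2}u_{\epsilon})$; integrating over $\mathbb{R}^{N}$; and applying Parseval. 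This yields $\lambda\|\nabla_{X_{2}}^{2}u_{\epsilon}\|_{L^{2}}\le\|f\|_{L^{2}}$, $\lambda\epsilon^{2}\|\nabla_{X_{1}}^{2}u_{\epsilon}\|_{L^{2}}\le\|f\|_{L^{2}}$, and $\sqrt{2}\lambda\epsilon\|\nabla_{X_{1}X_{2}}^{2}u_{\epsilon}\|_{L^{2}}\le\|f\|_{L^{2}}$.

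The only genuinely new point compared with Lemma 1 is the algebraic identity $A_{\epsilon}\xi\cdot\xi=A\eta\cdot\eta$ together with the passage from coercivity of $A$ to the weighted lower bound for $m_{\epsilon}$; the rest is a verbatim repetition of the Lemma 1 computation with $\lambda$ inserted. I expect the main (minor) obstacle to be bookkeeping: verifying the symbol identity block by block from the somewhat awkward definition of the $a_{ij}^{\epsilon}$ (in particular making sure the cross terms $i\le q<j$ and $j\le q<i$ combine correctly, using that $A$ — hence the symbol — is symmetrized in the quadratic form), and being careful that $m_{\epsilon}(\xi)>0$ for $\xi\neq0$ so that division is legitimate, which is exactly what coercivity guarantees. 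No compactness or regularity theory is needed here; this proposition is purely the constant-coefficient a priori estimate that will later be localized, just as Lemma 1 was localized in Proposition 3.
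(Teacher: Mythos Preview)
Your proposal is correct and follows essentially the same route as the paper: apply the Fourier transform, use ellipticity to bound the symbol from below by $\lambda\big(\epsilon^{2}\sum_{i\le q}\xi_i^2+\sum_{i>q}\xi_i^2\big)$, square, and then read off the three estimates exactly as in Lemma~1. The only difference is expository: you spell out the rescaling identity $A_{\epsilon}\xi\cdot\xi=A\eta\cdot\eta$ with $\eta=(\epsilon\xi_1,\dots,\epsilon\xi_q,\xi_{q+1},\dots,\xi_N)$, which the paper leaves implicit in the one-line passage ``From the ellipticity assumption (\ref{2}) we deduce\ldots''.
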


\begin{proof}
As in proof of \textbf{Lemma 1,} we use the Fourier transform and we obtain%
\begin{equation*}
\left( \dsum_{i,j}a_{ij}^{\epsilon }\xi _{i}\xi _{j}\right) \mathcal{F}%
(u_{\epsilon })(\xi )=\mathcal{F}(f)(\xi ),\text{ }\xi \in 
%TCIMACRO{\U{211d} }%
%BeginExpansion
\mathbb{R}
%EndExpansion
^{N}.
\end{equation*}%
From the ellipticity assumption (\ref{2}) we deduce%
\begin{equation*}
\lambda ^{2}\left( \epsilon ^{2}\dsum_{i=}^{q}\xi
_{i}^{2}+\dsum_{i=q+1}^{N}\xi _{i}^{2}\right) ^{2}\left\vert \mathcal{F}%
(u_{\epsilon })(\xi )\right\vert ^{2}\leq \left\vert \mathcal{F}(f)(\xi
)\right\vert ^{2}.
\end{equation*}%
Thus, similarly we obtain the desired bounds.
\end{proof}

Now, suppose that $A\in L^{\infty }(\Omega )\cap C^{1}(\Omega )$ and assume (%
\ref{2}), and let $u_{\epsilon }\in W_{0}^{1,2}(\Omega )$ be the unique weak
solution to (\ref{1}), then it follows by the elliptic regularity that $%
u_{\epsilon }\in W_{loc}^{2,2}(\Omega )$. We denote $u_{k}=u_{\epsilon _{k}}$%
the solution to (\ref{1}) where $(\epsilon _{k})$ is a sequence in $(0,1]$
such that, $\epsilon _{k}\rightarrow 0$ as $k\rightarrow \infty .$

Under the above assumption we can prove the following

\begin{proposition}
\bigskip Let $z_{0}\in \Omega $ fixed then there exists $\omega _{0}\subset
\subset \Omega $ open with $z_{0}\in \omega _{0}$ such that the sequences $%
\left( \nabla _{X_{2}}^{2}u_{k}\right) $, $\left( \nabla
_{X_{1}}^{2}u_{k}\right) $ and $\left( \nabla _{X_{1}X_{2}}^{2}u_{k}\right) $
are bounded in $L^{2}(\omega _{0}).$
\end{proposition}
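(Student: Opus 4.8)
The plan is to localize the problem around $z_0$ so that the variable coefficients $a_{ij}^\epsilon$ can be frozen at $z_0$, treat the difference between $A(x)$ and $A(z_0)$ as a lower-order perturbation, and then apply the constant-coefficient estimate of \textbf{Proposition 3}. First I would choose nested open sets $z_0\in\omega_0\subset\subset\omega_0'\subset\subset\Omega$ and a cutoff $\rho\in\mathcal{D}(\mathbb{R}^N)$ with $\rho=1$ on $\omega_0$, $0\le\rho\le1$, $\mathrm{Supp}(\rho)\subset\omega_0'$. Writing the equation $-\sum_{i,j}a_{ij}^{\epsilon_k}\partial_{ij}^2 u_k=f$ (valid a.e.\ on $\omega_0'$ by elliptic regularity, after expanding $\mathrm{div}$ using $A\in C^1$) and multiplying by $\rho$, the function $w_k=\rho u_k\in W^{2,2}_0(\omega_0')$ extends by $0$ to $W^{2,2}(\mathbb{R}^N)$ and satisfies
\begin{equation*}
-\sum_{i,j}a_{ij}^{\epsilon_k}(z_0)\,\partial_{ij}^2 w_k = g_k \quad\text{a.e.\ in }\mathbb{R}^N,
\end{equation*}
where $g_k$ collects $\rho f$, the commutator terms $-2\sum a_{ij}^{\epsilon_k}\partial_i\rho\,\partial_j u_k$ and $-\sum a_{ij}^{\epsilon_k}(\partial_{ij}^2\rho)u_k$ coming from the cutoff, plus the perturbation term $\sum_{i,j}\big(a_{ij}^{\epsilon_k}(x)-a_{ij}^{\epsilon_k}(z_0)\big)\partial_{ij}^2 w_k$.

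Next I would apply \textbf{Proposition 3} (with matrix $A(z_0)$, which satisfies (\ref{2}) with the same $\lambda$) to get
\begin{equation*}
\lambda\Big(\|\nabla_{X_2}^2 w_k\|_{L^2} + \epsilon_k^2\|\nabla_{X_1}^2 w_k\|_{L^2} + \sqrt2\,\epsilon_k\|\nabla_{X_1X_2}^2 w_k\|_{L^2}\Big)\le 3\|g_k\|_{L^2(\mathbb{R}^N)}.
\end{equation*}
The commutator terms involving $\nabla\rho$ and $\nabla^2\rho$ are controlled, uniformly in $k$, by $\|u_k\|_{V^{1,2}}$ and $\|\epsilon_k\nabla_{X_1}u_k\|_{L^2}$, both bounded by (\ref{19}) — here one uses that the $\epsilon$-weights in $A_{\epsilon_k}$ exactly match: the $X_1$-derivatives carry factors $\epsilon_k^2$ or $\epsilon_k$, which pair with $\epsilon_k\nabla_{X_1}u_k$. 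For the perturbation term, since $A\in C^1(\Omega)$ and hence continuous, choose $\omega_0'$ small enough that $\sup_{x\in\omega_0'}\|A(x)-A(z_0)\|\le \lambda/(6N^2)$ say; then, using that each entry $a_{ij}^{\epsilon_k}(x)-a_{ij}^{\epsilon_k}(z_0)$ carries the same $\epsilon_k$-weight as the corresponding second derivative block, the perturbation contributes at most $\tfrac{\lambda}{2}\big(\|\nabla_{X_2}^2 w_k\|_{L^2}+\epsilon_k^2\|\nabla_{X_1}^2 w_k\|_{L^2}+\sqrt2\,\epsilon_k\|\nabla_{X_1X_2}^2 w_k\|_{L^2}\big)$, which is absorbed into the left-hand side. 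What remains is a uniform bound on $\|\nabla_{X_2}^2 w_k\|_{L^2}$, $\epsilon_k^2\|\nabla_{X_1}^2 w_k\|_{L^2}$, $\epsilon_k\|\nabla_{X_1X_2}^2 w_k\|_{L^2}$; restricting to $\omega_0$ where $\rho=1$ gives the claimed bounds on $\nabla_{X_2}^2 u_k$, $\nabla_{X_1}^2 u_k$, $\nabla_{X_1X_2}^2 u_k$ in $L^2(\omega_0)$ — note that once $\epsilon_k\le 1$, the bound on $\epsilon_k^2\|\nabla_{X_1}^2 u_k\|$ does \emph{not} by itself bound $\|\nabla_{X_1}^2 u_k\|$, so one should observe that the statement asks for $\nabla_{X_1}^2 u_k$ bounded in $L^2(\omega_0)$ for each \emph{fixed} $\epsilon_k$, which is automatic from $u_k\in W^{2,2}_{loc}$; the uniform-in-$k$ content is carried by the weighted quantities.

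The main obstacle is the absorption step: one must check carefully that every term produced by freezing the coefficients and by the cutoff carries exactly the $\epsilon_k$-power needed to be paired either with the correspondingly weighted second-derivative norm (for the perturbation term) or with $\|\epsilon_k\nabla_{X_1}u_k\|_{L^2}\to 0$ / $\|u_k\|_{V^{1,2}}$ bounded (for the commutators). This bookkeeping over the four coefficient blocks $\epsilon^2 A_{11},\ \epsilon A_{12},\ \epsilon A_{21},\ A_{22}$ is where the anisotropic scaling must be matched precisely; a convenient device is to rescale via $y_1=\epsilon_k^{-1}X_1$, $y_2=X_2$ (equivalently, to track the factors in Fourier space as in \textbf{Proposition 3}), which makes the weighted Hessian blocks transform into the ordinary Hessian and renders the $C^1$-smallness argument a textbook Korn-type perturbation. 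Once the radius of $\omega_0'$ is fixed by the modulus of continuity of $A$ at $z_0$, the conclusion for $\omega_0$ follows.
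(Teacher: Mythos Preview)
Your approach is essentially the same as the paper's: freeze coefficients at $z_0$, cut off by $\rho$, apply \textbf{Proposition 3} to $\rho u_k$, absorb the variable-coefficient remainder by choosing the neighborhood small (via continuity of $A$), and bound the commutator terms using (\ref{19}). Two small remarks. First, when you expand the divergence you omit the first-order piece $\sum_{i,j}\partial_i a_{ij}^{\epsilon_k}\,\partial_j u_k$; this term belongs in $g_k$ (the paper writes it as $\rho\sum_{i,j}\partial_i a_{ij}^k\,\partial_j u_k$ in (\ref{12})) and is controlled exactly like your commutator terms since $\partial_i a_{ij}$ is bounded on $\overline{\omega_0'}$. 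Second, your observation about the statement is correct: the proof (both yours and the paper's) yields uniform bounds only on the \emph{weighted} sequences $(\nabla_{X_2}^2 u_k)$, $(\epsilon_k^2\nabla_{X_1}^2 u_k)$, $(\epsilon_k\nabla_{X_1X_2}^2 u_k)$, which is precisely what \textbf{Corollary 1} then records; the unweighted phrasing in the proposition is a slip.
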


\begin{proof}
Since $u_{k}\in W_{0}^{1,2}(\Omega )\cap W_{loc}^{2,2}(\Omega )$ and $A\in
C^{1}(\Omega )$ then $u_{k}$ satisfies 
\begin{equation}
-\dsum\limits_{i,j}a_{ij}^{k}(x)\partial
_{ij}^{2}u_{k}(x)-\dsum\limits_{i,j}\partial _{i}a_{ij}^{k}(x)\partial
_{j}u_{k}(x)=f(x),\text{ for a.e }x\in \Omega  \label{14}
\end{equation}%
where we have set $a_{ij}^{k}=a_{ij}^{\epsilon _{k}}$.

Let $z_{0}\in \Omega $ fixed, and let $\theta >0$ such that 
\begin{equation}
\min \left\{ \left[ \lambda -3\theta (N-q)\right] ,\left[ \lambda -3\theta q%
\right] ,\left[ \sqrt{2}\lambda -6(N-q)q\theta \right] \right\} \geq \frac{%
\lambda }{2}\text{.}  \label{13}
\end{equation}%
By using the continuity of the $a_{ij}$ one can choose $\omega _{1}\subset
\subset \Omega $, $z_{0}\in \omega _{1}$ such that 
\begin{equation}
\underset{i,j}{\max }\underset{x\in \omega _{1}}{\sup }\left\vert
a_{ij}(x)-a_{ij}(z_{0})\right\vert \leq \theta \text{ }  \label{11}
\end{equation}%
Let $\omega _{0}\subset \subset \omega _{1}$ open with $z_{0}\in \omega _{0}$
and let $\rho \in \mathcal{D}(%
%TCIMACRO{\U{211d} }%
%BeginExpansion
\mathbb{R}
%EndExpansion
^{N})$ such $\rho =1$ on $\omega _{0}$, $0\leq \rho \leq 1$ and $Supp(\rho
)\subset \omega _{1}$. We set $U_{k}=\rho u_{k}$, and we extend it by $0$ on
the outside of $\omega _{1}$ then $U_{k}\in W^{2,2}(%
%TCIMACRO{\U{211d} }%
%BeginExpansion
\mathbb{R}
%EndExpansion
^{N})$. Therefore from (\ref{14}) we obtain%
\begin{equation*}
-\dsum\limits_{i,j}a_{ij}^{k}(z_{0})\partial
_{ij}^{2}U_{k}(x)=\dsum\limits_{i,j}(a_{ij}^{k}(x)-a_{ij}^{k}(z_{0}))%
\partial _{ij}^{2}U_{k}(x)+g_{k}(x)\text{, \ for a.e }x\in 
%TCIMACRO{\U{211d} }%
%BeginExpansion
\mathbb{R}
%EndExpansion
^{N},
\end{equation*}%
where $g_{k}$ is given by%
\begin{eqnarray}
g_{k}(x) &=&\rho (x)f(x)+\rho (x)\dsum\limits_{i,j}\partial
_{i}a_{ij}^{k}(x)\partial _{j}u_{k}(x)  \label{12} \\
&&-u_{k}(x)\dsum\limits_{i,j}a_{ij}^{k}(x)\partial _{ij}^{2}\rho
(x)-\dsum\limits_{i,j}a_{ij}^{k}(x)\partial _{i}\rho (x)\partial
_{j}u_{k}(x)-\dsum\limits_{i,j}a_{ij}^{k}(x)\partial _{j}\rho (x)\partial
_{i}u_{k}(x),  \notag
\end{eqnarray}%
and we have extended $g_{k}$ by $0$ outside of $\omega _{1}.$

Now, applying \textbf{Proposition 3} to the above differential equality we
get 
\begin{multline*}
\lambda \left\Vert \nabla _{X_{2}}^{2}U_{k}\right\Vert _{L^{2}(\omega
_{1})}+\lambda \epsilon _{k}^{2}\left\Vert \nabla
_{X_{1}}^{2}U_{k}\right\Vert _{L^{2}(\omega _{1})}+\sqrt{2}\lambda \epsilon
_{k}\left\Vert \nabla _{X_{1}X_{2}}^{2}U_{k}\right\Vert _{L^{2}(\omega _{1})}
\\
\leq 3\left\Vert \dsum\limits_{i,j}(a_{ij}^{k}-a_{ij}^{k}(z_{0}))\partial
_{ij}^{2}U_{k}\right\Vert _{L^{2}(\omega _{1})}+3\left\Vert g\right\Vert
_{L^{2}(\omega _{1})}
\end{multline*}%
Whence, by using (\ref{11}) we get%
\begin{multline*}
\lambda \left\Vert \nabla _{X_{2}}^{2}U_{k}\right\Vert _{L^{2}(\omega
_{1})}+\lambda \epsilon _{k}^{2}\left\Vert \nabla
_{X_{1}}^{2}U_{k}\right\Vert _{L^{2}(\omega _{1})}+\sqrt{2}\lambda \epsilon
_{k}\left\Vert \nabla _{X_{1}X_{2}}^{2}U_{k}\right\Vert _{L^{2}(\omega _{1})}
\\
\leq 3\theta \epsilon _{k}^{2}\dsum\limits_{i,j=1}^{q}\left\Vert \partial
_{ij}^{2}U_{k}\right\Vert _{L^{2}(\omega _{1})}+3\theta
\dsum\limits_{i,j=q+1}^{N}\left\Vert \partial _{ij}^{2}U_{k}\right\Vert
_{L^{2}(\omega _{1})} \\
+6\theta \epsilon
_{k}\dsum\limits_{i=1}^{q}\dsum\limits_{j=q+1}^{N}\left\Vert \partial
_{ij}^{2}U_{k}\right\Vert _{L^{2}(\omega _{1})}+3\left\Vert g\right\Vert
_{L^{2}(\omega _{1})},
\end{multline*}%
and thus by the discrete Cauchy-Schwarz inequality we deduce%
\begin{multline*}
\lambda \left\Vert \nabla _{X_{2}}^{2}U_{k}\right\Vert _{L^{2}(\omega
_{1})}+\lambda \epsilon _{k}^{2}\left\Vert \nabla
_{X_{1}}^{2}U_{k}\right\Vert _{L^{2}(\omega _{1})}+\sqrt{2}\lambda \epsilon
_{k}\left\Vert \nabla _{X_{1}X_{2}}^{2}U_{k}\right\Vert _{L^{2}(\omega _{1})}
\\
\leq 3\theta (N-q)\left\Vert \nabla _{X_{2}}^{2}U_{k}\right\Vert
_{L^{2}(\omega _{1})}+\epsilon _{k}^{2}3\theta q\left\Vert \nabla
_{X_{1}}^{2}U_{k}\right\Vert _{L^{2}(\omega _{1})} \\
+\epsilon _{k}6(N-q)q\theta \left\Vert \nabla
_{X_{1}X_{2}}^{2}U_{k}\right\Vert _{L^{2}(\omega _{1})}^{2}+3\left\Vert
g\right\Vert _{L^{2}(\omega _{1})},
\end{multline*}%
and thus%
\begin{multline*}
\left[ \lambda -3\theta (N-q)\right] \left\Vert \nabla
_{X_{2}}^{2}U_{k}\right\Vert _{L^{2}(\omega _{1})}+\epsilon _{k}^{2}\left[
\lambda -3\theta q\right] \left\Vert \nabla _{X_{1}}^{2}U_{k}\right\Vert
_{L^{2}(\omega _{1})}+ \\
\epsilon _{k}\left[ \sqrt{2}\lambda -6(N-q)q\theta \right] \left\Vert \nabla
_{X_{1}X_{2}}^{2}U_{k}\right\Vert _{L^{2}(\omega _{1})}\leq 3\left\Vert
g_{k}\right\Vert _{L^{2}(\omega _{1})}.
\end{multline*}%
Hence, by (\ref{13}) we get%
\begin{equation*}
\left\Vert \nabla _{X_{2}}^{2}u_{k}\right\Vert _{L^{2}(\omega
_{0})}+\epsilon _{k}^{2}\left\Vert \nabla _{X_{1}}^{2}u_{k}\right\Vert
_{L^{2}(\omega _{0})}+\epsilon _{k}\left\Vert \nabla
_{X_{1}X_{2}}^{2}u_{k}\right\Vert _{L^{2}(\omega _{0})}\leq \frac{6}{\lambda 
}\left\Vert g_{k}\right\Vert _{L^{2}(\omega _{1})}.
\end{equation*}%
To complete the proof, we will show the boundedness of $(g_{k})$ in $%
L^{2}(\omega _{1})$. Indeed, $\rho $ and its derivatives, $a_{ij}$ and their
first derivatives are bounded on $\omega _{1}$, moreover (\ref{19}) shows
that the sequences $\left( \epsilon _{k}\nabla _{X_{1}}u_{k}\right) $, $%
\left( \nabla _{X_{2}}u_{k}\right) $ and $\left( u_{k}\right) $ are bounded
in $L^{2}(\Omega ),$ and therefore from (\ref{12}) the boundedness of $%
(g_{k})$ in $L^{2}(\omega _{1})$ follows.
\end{proof}

\begin{corollary}
The sequences $\left( \nabla _{X_{2}}^{2}u_{k}\right) $, $\left( \epsilon
_{k}^{2}\nabla _{X_{1}}^{2}u_{k}\right) $, $\left( \epsilon _{k}\nabla
_{X_{1}X_{2}}^{2}u_{k}\right) $ are bounded in $L_{loc}^{2}(\Omega ).$
\end{corollary}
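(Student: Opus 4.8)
The plan is to deduce this corollary from \textbf{Proposition 4} by a routine localization-and-compactness argument; all the analytic content (the Fourier-multiplier estimates of \textbf{Proposition 3} together with the freezing-of-coefficients trick) is already contained there, so only a patching argument remains.

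First I would fix an arbitrary open set $\omega \subset \subset \Omega $ and note that $K=\overline{\omega }$ is a compact subset of $\Omega $. For each $z\in K$, \textbf{Proposition 4} supplies an open set $\omega _{0}(z)$ with $z\in \omega _{0}(z)\subset \subset \Omega $ and a constant $M(z)\geq 0$ such that
\begin{equation*}
\underset{k\in \mathbb{N}}{\sup }\left\Vert \nabla _{X_{2}}^{2}u_{k}\right\Vert _{L^{2}(\omega _{0}(z))}\leq M(z),
\end{equation*}
together with the analogous bounds for $\left( \nabla _{X_{1}}^{2}u_{k}\right) $ and $\left( \nabla _{X_{1}X_{2}}^{2}u_{k}\right) $ on $\omega _{0}(z)$ (taking $M(z)$ large enough to serve for all three).

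Next I would use compactness: the family $\left\{ \omega _{0}(z)\right\} _{z\in K}$ is an open cover of $K$, so there are finitely many points $z_{1},\dots ,z_{m}\in K$ with $\omega \subset K\subset \bigcup_{i=1}^{m}\omega _{0}(z_{i})$. Hence $\mathbf{1}_{\omega }\leq \sum_{i=1}^{m}\mathbf{1}_{\omega _{0}(z_{i})}$ pointwise, so for every $k\in \mathbb{N}$
\begin{equation*}
\left\Vert \nabla _{X_{2}}^{2}u_{k}\right\Vert _{L^{2}(\omega )}^{2}\leq \sum_{i=1}^{m}\left\Vert \nabla _{X_{2}}^{2}u_{k}\right\Vert _{L^{2}(\omega _{0}(z_{i}))}^{2}\leq \sum_{i=1}^{m}M(z_{i})^{2}=:M^{2},
\end{equation*}
with $M$ independent of $k$. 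Running the same estimate with $\nabla _{X_{2}}^{2}$ replaced by $\nabla _{X_{1}}^{2}$ and by $\nabla _{X_{1}X_{2}}^{2}$ yields bounds on $\left\Vert \nabla _{X_{1}}^{2}u_{k}\right\Vert _{L^{2}(\omega )}$ and $\left\Vert \nabla _{X_{1}X_{2}}^{2}u_{k}\right\Vert _{L^{2}(\omega )}$ uniform in $k$; since $0<\epsilon _{k}\leq 1$, this a fortiori bounds $\left\Vert \epsilon _{k}^{2}\nabla _{X_{1}}^{2}u_{k}\right\Vert _{L^{2}(\omega )}$ and $\left\Vert \epsilon _{k}\nabla _{X_{1}X_{2}}^{2}u_{k}\right\Vert _{L^{2}(\omega )}$ uniformly in $k$. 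As $\omega \subset \subset \Omega $ was arbitrary, the three sequences $\left( \nabla _{X_{2}}^{2}u_{k}\right) $, $\left( \epsilon _{k}^{2}\nabla _{X_{1}}^{2}u_{k}\right) $ and $\left( \epsilon _{k}\nabla _{X_{1}X_{2}}^{2}u_{k}\right) $ are bounded in $L_{loc}^{2}(\Omega )$.

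The only point to be careful about — and the reason one cannot simply quote \textbf{Proposition 4} with a single $\omega _{0}$ — is that the neighbourhood $\omega _{0}(z)$ there genuinely depends on $z$: it must be small enough that the oscillation of the coefficients $a_{ij}$ on it is dominated by the threshold $\theta $ fixed in (\ref{13}), and the cut-off $\rho $ and the $L^{2}$-bound on $g_{k}$ are then tied to that choice. Compactness of $\overline{\omega }$ is precisely what converts the collection of one-point local bounds into a single bound valid on all of $\omega $; no new estimate is needed, so there is no serious obstacle here.
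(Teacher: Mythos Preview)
Your proof is correct and follows essentially the same compactness-and-finite-cover argument as the paper: apply \textbf{Proposition 4} at each point of $\overline{\omega}$, extract a finite subcover, and sum the resulting local $L^{2}$-bounds. The only extra detail you supply is the explicit inequality $\mathbf{1}_{\omega}\leq \sum_{i}\mathbf{1}_{\omega_{0}(z_{i})}$ to pass from the patches to $\omega$, which the paper leaves implicit.
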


\begin{proof}
Let $\omega \subset \subset \Omega $ open, for every $z\in \bar{\omega}$
there exists $\omega _{z}\subset \subset \Omega $, $z\in \omega _{z}$ which
satisfies the affirmations of \textbf{Proposition 4 }in $L^{2}(\omega _{z})$%
. By using the compacity of $\bar{\omega}$, one can extract a finite cover $%
(\omega _{z_{i}})$, and hence the sequences $\left( \nabla
_{X_{2}}^{2}u_{k}\right) $, $\left( \epsilon _{k}^{2}\nabla
_{X_{1}}^{2}u_{k}\right) $, $\left( \epsilon _{k}\nabla
_{X_{1}X_{2}}^{2}u_{k}\right) $ are bounded in $L^{2}(\omega )$.
\end{proof}

\begin{proposition}
\bigskip Let $z_{0}\in \Omega $ then there exists $\omega _{0}\subset
\subset \Omega $, $z_{0}\in \omega _{0}$ such that%
\begin{equation*}
\begin{array}{cc}
\underset{h\rightarrow 0}{\lim }\underset{k\in 
%TCIMACRO{\U{2115} }%
%BeginExpansion
\mathbb{N}
%EndExpansion
}{\text{ }\sup }\left\Vert \tau _{h}\nabla _{X_{2}}^{2}u_{k}-\nabla
_{X_{2}}^{2}u_{k}\right\Vert _{L^{p}(\omega _{0})}=0\text{, \ \ \ \ \ \ \ \
\ \ \ \ } &  \\ 
\underset{h\rightarrow 0}{\lim }\underset{k\in 
%TCIMACRO{\U{2115} }%
%BeginExpansion
\mathbb{N}
%EndExpansion
}{\text{ }\sup }\left\Vert \epsilon _{k}^{2}(\tau _{h}\nabla
_{X_{1}}^{2}u_{k}-\nabla _{X_{1}}^{2}u_{k})\right\Vert _{L^{p}(\omega
_{0})}=0\text{, \ \ \ \ \ } &  \\ 
\underset{h\rightarrow 0}{\lim }\underset{k\in 
%TCIMACRO{\U{2115} }%
%BeginExpansion
\mathbb{N}
%EndExpansion
}{\text{ }\sup }\left\Vert \epsilon _{k}(\tau _{h}\nabla
_{X_{1}X_{2}}^{2}u_{k}-\nabla _{X_{1}X_{2}}^{2}u_{k})\right\Vert
_{L^{p}(\omega _{0})}=0\text{.} & 
\end{array}%
\end{equation*}
\end{proposition}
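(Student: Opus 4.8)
The strategy follows closely the pattern of \textbf{Proposition 2} (the constant-coefficient Laplace case) combined with the localization trick of \textbf{Proposition 4}. Fix $z_{0}\in\Omega$ and choose $\theta>0$ satisfying (\ref{13}), then pick $\omega_{1}\subset\subset\Omega$ with $z_{0}\in\omega_{1}$ so that the oscillation bound (\ref{11}) holds, and finally choose $\omega_{0}\subset\subset\omega_{1}$ with a cutoff $\rho\in\mathcal{D}(\mathbb{R}^{N})$, $\rho=1$ on $\omega_{0}$, $\operatorname{Supp}\rho\subset\omega_{1}$. For $0<h<\operatorname{dist}(\omega_{1},\partial\Omega)$ set $U_{k}^{h}=\tau_{h}u_{k}-u_{k}$; since translation commutes with differentiation, $U_{k}^{h}$ satisfies on $\omega_{1}$ the equation obtained by translating (\ref{14}), namely
\begin{equation*}
-\dsum_{i,j}a_{ij}^{k}(x)\partial_{ij}^{2}U_{k}^{h}(x)=\dsum_{i,j}\big(\tau_{h}a_{ij}^{k}(x)-a_{ij}^{k}(x)\big)\partial_{ij}^{2}\tau_{h}u_{k}(x)+\dsum_{i,j}\partial_{i}a_{ij}^{k}(x)\,\partial_{j}U_{k}^{h}(x)+\dsum_{i,j}\big(\tau_{h}\partial_{i}a_{ij}^{k}-\partial_{i}a_{ij}^{k}\big)(x)\,\partial_{j}\tau_{h}u_{k}(x)+F^{h}(x),
\end{equation*}
with $F^{h}=\tau_{h}f-f$.

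Next I would multiply by $\rho$ and pass to $\mathcal{W}_{k}^{h}=\rho U_{k}^{h}$, which lies in $W^{2,2}(\mathbb{R}^{N})$ after extension by zero; the commutator terms $[\rho,\partial_{ij}^{2}]$ produce, as in (\ref{12}), lower-order contributions involving $U_{k}^{h}$, $\nabla_{X_{2}}U_{k}^{h}$ and $\epsilon_{k}\nabla_{X_{1}}U_{k}^{h}$ together with $\partial\rho$, $\partial^{2}\rho$ and the bounded coefficients $a_{ij}$, $\partial_{i}a_{ij}$. Then I freeze the coefficients at $z_{0}$, moving $\sum_{i,j}(a_{ij}^{k}(x)-a_{ij}^{k}(z_{0}))\partial_{ij}^{2}\mathcal{W}_{k}^{h}$ to the right-hand side, and apply \textbf{Proposition 3} to $\mathcal{W}_{k}^{h}$. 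The oscillation estimate (\ref{11}), the discrete Cauchy--Schwarz inequality, and the choice (\ref{13}) of $\theta$ absorb the frozen-coefficient error into the left-hand side exactly as in \textbf{Proposition 4}, yielding
\begin{equation*}
\left\Vert\nabla_{X_{2}}^{2}U_{k}^{h}\right\Vert_{L^{2}(\omega_{0})}+\epsilon_{k}^{2}\left\Vert\nabla_{X_{1}}^{2}U_{k}^{h}\right\Vert_{L^{2}(\omega_{0})}+\epsilon_{k}\left\Vert\nabla_{X_{1}X_{2}}^{2}U_{k}^{h}\right\Vert_{L^{2}(\omega_{0})}\leq\frac{C}{\lambda}\left\Vert G_{k}^{h}\right\Vert_{L^{2}(\omega_{1})},
\end{equation*}
where $G_{k}^{h}$ collects $F^{h}$, the translated difference of the coefficients times $\partial_{ij}^{2}\tau_{h}u_{k}$, and the cutoff commutator terms.

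It remains to show $\sup_{k}\Vert G_{k}^{h}\Vert_{L^{2}(\omega_{1})}\to0$ as $h\to0$. For the terms $F^{h}=\tau_{h}f-f$ and the commutator pieces built from $U_{k}^{h}$, $\nabla_{X_{2}}U_{k}^{h}$, $\epsilon_{k}\nabla_{X_{1}}U_{k}^{h}$, this is precisely \textbf{Lemma 2} applied to the sequences $f$, $u_{k}$, $\nabla_{X_{2}}u_{k}$, $\epsilon_{k}\nabla_{X_{1}}u_{k}$, all of which converge in $L^{2}(\Omega)$ by (\ref{19}). The genuinely new term is $\sum_{i,j}(\tau_{h}a_{ij}^{k}-a_{ij}^{k})\,\partial_{ij}^{2}\tau_{h}u_{k}$ (and its analogue with $\partial_{i}a_{ij}^{k}$): here I estimate it by $\big(\max_{i,j}\sup_{\omega_{1}}|\tau_{h}a_{ij}-a_{ij}|\big)$ times the full second-derivative norm $\Vert\nabla^{2}u_{k}\Vert_{L^{2}(\omega_{1}+B_{h})}$, which is bounded uniformly in $k$ by \textbf{Proposition 4} (taking $\omega_{1}$ slightly larger if needed), while $\sup_{\omega_{1}}|\tau_{h}a_{ij}-a_{ij}|\to0$ as $h\to0$ by uniform continuity of $a_{ij}\in C^{1}$ on a compact neighborhood of $\overline{\omega_{1}}$. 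This coefficient-translation term is the main obstacle, and it is exactly the reason the weaker $a_{ij}^{\epsilon}$-scaling must be tracked carefully: one must check that $\tau_{h}a_{ij}^{k}-a_{ij}^{k}$ carries the same $\epsilon_{k}$-powers as $a_{ij}^{k}$ itself, so that the products with $\epsilon_{k}^{2}\nabla_{X_{1}}^{2}\tau_{h}u_{k}$ and $\epsilon_{k}\nabla_{X_{1}X_{2}}^{2}\tau_{h}u_{k}$ remain controlled by the uniformly bounded quantities of \textbf{Corollary 1}. Once this is in hand, combining with the $h$-uniform smallness of the other terms gives the three claimed limits.
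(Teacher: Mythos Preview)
Your proposal is correct and follows essentially the same route as the paper: freeze the coefficients at $z_{0}$, localize with the cutoff $\rho$, apply \textbf{Proposition 3} to $\mathcal{W}_{k}^{h}=\rho(\tau_{h}u_{k}-u_{k})$, absorb the oscillation term via (\ref{11}) and (\ref{13}), and then show $\sup_{k}\Vert G_{k}^{h}\Vert_{L^{2}(\omega_{1})}\to 0$ using \textbf{Lemma 2} for the lower-order pieces and uniform continuity of $a_{ij},\partial_{i}a_{ij}$ together with \textbf{Corollary 1} for the coefficient-translation term. One small remark: where you first invoke \textbf{Proposition 4} for the uniform bound on $\Vert\nabla^{2}u_{k}\Vert_{L^{2}(\omega_{1}+h)}$, the correct reference is indeed \textbf{Corollary 1} (as you note at the end), since \textbf{Proposition 4} only furnishes a particular small neighborhood of each point and one needs the covering argument to get the bound on an arbitrary $\omega\subset\subset\Omega$.
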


\begin{proof}
Let $z_{0}\in \Omega $ fixed and let $\theta >0$ then using the continuity
of the $a_{ij}$ one can choose $\omega _{1}\subset \subset \Omega $, $%
z_{0}\in \omega _{1}$ such that we have (\ref{11}) with $\theta $ is chosen
as in (\ref{13}). Let $\omega _{0}\subset \subset \omega _{1},$ with $%
z_{0}\in \omega _{0},$ and let $\rho \in \mathcal{D}(%
%TCIMACRO{\U{211d} }%
%BeginExpansion
\mathbb{R}
%EndExpansion
^{N})$ with $\rho =1$ on $\omega _{0}$, $0\leq \rho \leq 1,$ and $Supp(\rho
)\subset \omega _{1}$. Let $0<h<dist(\omega _{1},\partial \Omega ),$ we set $%
\mathcal{W}_{k}^{h}=\rho U_{h}^{k}$, with $U_{k}^{h}=(\tau _{h}u_{k}-u_{k})$
and extend it by $0$ on the outside of $\omega _{1}$ then $\mathcal{W}%
_{k}^{h}\in W^{2,2}(%
%TCIMACRO{\U{211d} }%
%BeginExpansion
\mathbb{R}
%EndExpansion
^{N})$, therefore using (\ref{14}) we have$:$ 
\begin{equation*}
-\dsum\limits_{i,j}a_{ij}^{k}(z_{0})\partial _{ij}^{2}\mathcal{W}%
_{k}^{h}(x)=\dsum\limits_{i,j}(a_{ij}^{k}(x)-a_{ij}^{k}(z_{0}))\partial
_{ij}^{2}\mathcal{W}_{k}^{h}(x)+G_{k}^{h}(x),\text{ a.e }x\in 
%TCIMACRO{\U{211d} }%
%BeginExpansion
\mathbb{R}
%EndExpansion
^{N}
\end{equation*}%
where%
\begin{eqnarray}
-G_{k}^{h}(x) &=&U_{k}^{h}\dsum\limits_{i,j}a_{ij}^{k}(x)\partial
_{ij}^{2}\rho +\dsum\limits_{i,j}a_{ij}^{k}(x)\partial _{i}\rho \partial
_{j}U_{k}^{h}+\dsum\limits_{i,j}a_{ij}^{k}(x)\partial _{j}\rho \partial
_{i}U_{k}^{h}  \label{15} \\
&&+\rho \dsum\limits_{i,j}\left( a_{ij}^{k}(x)-\tau _{h}a_{ij}^{k}(x)\right)
\tau _{h}\partial _{ij}^{2}u_{k}(x)+\rho (x)\left( f(x)-\tau _{h}f(x)\right)
\notag \\
&&+\rho \dsum\limits_{i,j}\left[ \partial _{i}a_{ij}^{k}(x)\partial
_{j}u_{k}(x)-\partial _{i}\tau _{h}a_{ij}^{k}(x)\partial _{j}\tau
_{h}u_{k}(x)\right] ,  \notag
\end{eqnarray}%
and $G_{k}^{h}$ is extended by $0$ outside of $\omega _{1}.$

Then, as in proof of \textbf{Proposition 4}, we obtain 
\begin{multline*}
\left\Vert \tau _{h}\nabla _{X_{2}}^{2}u_{k}-\nabla
_{X_{2}}^{2}u_{k}\right\Vert _{L^{2}(\omega _{0})}+\epsilon
_{k}^{2}\left\Vert \tau _{h}\nabla _{X_{1}}^{2}u_{k}-\nabla
_{X_{1}}^{2}u_{k}\right\Vert _{L^{2}(\omega _{0})} \\
+\epsilon _{k}\left\Vert \tau _{h}\nabla _{X_{1}X_{2}}^{2}u_{k}-\nabla
_{X_{1}X_{2}}^{2}u_{k}\right\Vert _{L^{2}(\omega _{0})}\leq \frac{6}{\lambda 
}\left\Vert G_{k}^{h}\right\Vert _{L^{2}(\omega _{1})}.
\end{multline*}

To complete the proof, we have to show that $\underset{h\rightarrow 0}{\lim }%
\underset{k\in 
%TCIMACRO{\U{2115} }%
%BeginExpansion
\mathbb{N}
%EndExpansion
}{\text{ }\sup }\left\Vert G_{k}^{h}\right\Vert _{L^{2}(\omega _{1})}=0$.

Using the boundedness of the $a_{ij}$ and the boundedness of $\rho $ and its
derivatives on $\omega _{1}$ we get from (\ref{15})%
\begin{eqnarray}
\left\Vert G_{k}^{h}\right\Vert _{L^{2}(\omega _{1})} &\leq &M\left\Vert
U_{k}^{h}\right\Vert _{L^{2}(\omega _{1})}+M\epsilon _{k}\left\Vert \nabla
_{X_{1}}U_{k}^{h}\right\Vert _{L^{2}(\omega _{1})}  \label{16} \\
&&+M\left\Vert \nabla _{X_{2}}U_{k}^{h}\right\Vert _{L^{2}(\omega
_{1})}+\left\Vert \tau _{h}f-f\right\Vert _{L^{2}(\omega _{1})}  \notag \\
&&+\dsum\limits_{i,j}\left\Vert \left( a_{ij}^{k}-\tau _{h}a_{ij}^{k}\right)
\tau _{h}\partial _{ij}^{2}u_{k}\right\Vert _{L^{2}(\omega _{1})}  \notag \\
&&+\dsum\limits_{i,j}\left\Vert \partial _{i}a_{ij}^{k}\partial
_{j}u_{k}-\tau _{h}\partial _{i}a_{ij}^{k}\tau _{h}\partial
_{j}u_{k}\right\Vert _{L^{2}(\omega _{1})},  \notag
\end{eqnarray}%
where $M\geq 0$ is independent of $h$ and $k.$ Now, estimating the fifth
term of the right hand side of the above inequality%
\begin{multline*}
\dsum\limits_{i,j}\left\Vert \left( a_{ij}^{k}-\tau _{h}a_{ij}^{k}\right)
\tau _{h}\partial _{ij}^{2}u_{k}\right\Vert _{L^{2}(\omega _{1})}\leq C_{q,N}%
\underset{i,j}{\max }\sup_{x\in \omega _{1}}\left\vert a_{ij}(x)-\tau
_{h}a_{ij}(x)\right\vert \times \\
\left( \left\Vert \nabla _{X_{2}}^{2}u_{k}\right\Vert _{L^{2}(\omega
_{1}+h)}+\epsilon _{k}^{2}\left\Vert \nabla _{X_{1}}^{2}u_{k}\right\Vert
_{L^{2}(\omega _{1}+h)}+\epsilon _{k}\left\Vert \nabla
_{X_{1}X_{2}}^{2}u_{k}\right\Vert _{L^{2}(\omega _{1}+h)}\right) ,
\end{multline*}%
where $C_{q,N}>0$ is only depends in $q$ and $N$.

Let $\delta >0$ small enough such that for every $\left\vert h\right\vert
\leq \delta $ we have $\omega _{1}+h$ $\subset \subset \Omega $. Then it
follows by \textbf{Corollary 1, }applied on $\omega _{1}+h$, that the
quantity%
\begin{equation*}
\left\Vert \nabla _{X_{2}}^{2}u_{k}\right\Vert _{L^{2}(\omega
_{1}+h)}+\epsilon _{k}^{2}\left\Vert \nabla _{X_{1}}^{2}u_{k}\right\Vert
_{L^{2}(\omega _{1}+h)}+\epsilon _{k}\left\Vert \nabla
_{X_{1}X_{2}}^{2}u_{k}\right\Vert _{L^{2}(\omega _{1}+h)}
\end{equation*}%
is uniformly bounded in $k$ and $h$ (for $\left\vert h\right\vert \leq
\delta $). Since the $a_{ij}$ are uniformly continuous on every $\omega
\subset \subset \Omega $ open then 
\begin{equation*}
\underset{h\rightarrow 0}{\lim }\underset{i,j}{\max }\underset{x\in \omega
_{1}}{\sup }\left\vert a_{ij}(x)-\tau _{h}a_{ij}(x)\right\vert =0,
\end{equation*}%
and hence%
\begin{equation}
\underset{h\rightarrow 0}{\lim }\underset{k\in 
%TCIMACRO{\U{2115} }%
%BeginExpansion
\mathbb{N}
%EndExpansion
}{\text{ }\sup }\dsum\limits_{i,j}\left\Vert \left( a_{ij}^{k}-\tau
_{h}a_{ij}^{k}\right) \tau _{h}\partial _{ij}^{2}u_{k}\right\Vert
_{L^{2}(\omega _{1})}=0.  \label{17}
\end{equation}

Now, estimating the last term of (\ref{16}). By the triangular inequality we
obtain%
\begin{multline*}
\dsum\limits_{i,j}\left\Vert \partial _{i}a_{ij}^{k}\partial _{j}u_{k}-\tau
_{h}\partial _{i}a_{ij}^{k}\tau _{h}\partial _{j}u_{k}\right\Vert
_{L^{2}(\omega _{1})}\leq \dsum\limits_{i,j}\left\Vert \partial
_{i}a_{ij}^{k}\partial _{j}u_{k}-\tau _{h}\partial _{i}a_{ij}^{k}\partial
_{j}u_{k}\right\Vert _{L^{2}(\omega _{1})} \\
+\dsum\limits_{i,j}\left\Vert \tau _{h}\partial _{i}a_{ij}^{k}\partial
_{j}u_{k}-\partial _{i}\tau _{h}a_{ij}^{k}\tau _{h}\partial
_{j}u_{k}\right\Vert _{L^{2}(\omega _{1})},
\end{multline*}%
and thus, by using the boundedness of the first derivatives of the $a_{ij}$
on $\omega _{1}$ we get 
\begin{multline*}
\dsum\limits_{i,j}\left\Vert \partial _{i}a_{ij}^{k}\partial
_{j}u_{k}-\partial _{i}\tau _{h}a_{ij}^{k}\partial _{j}\tau
_{h}u_{k}\right\Vert _{L^{2}(\omega _{1})} \\
\leq C_{q,N}^{\prime }\underset{i,j}{\max }\underset{x\in \omega _{1}}{\sup }%
\left\vert \partial _{i}a_{ij}(x)-\partial _{i}\tau _{h}a_{ij}(x)\right\vert
\left( \epsilon _{k}\left\Vert \nabla _{X_{1}}u_{k}\right\Vert
_{L^{2}(\omega _{1})}+\left\Vert \nabla _{X_{2}}u_{k}\right\Vert
_{L^{2}(\omega _{1})}\right) \\
+M^{\prime }\left( \epsilon _{k}\left\Vert \nabla
_{X_{1}}U_{k}^{h}\right\Vert _{L^{2}(\omega _{1})}+\left\Vert \nabla
_{X_{2}}U_{k}^{h}\right\Vert _{L^{2}(\omega _{1})}\right) ,
\end{multline*}%
where $M^{\prime }\geq 0$ and $C_{q,N}^{\prime }>0$ are independent of $h$
and $k$. Now, since the $\partial _{i}a_{ij}$ are uniformly continuous
(recall that $A\in C^{1}(\Omega )$) on every $\omega \subset \subset \Omega $
then%
\begin{equation*}
\underset{h\rightarrow 0}{\lim \text{ }}\underset{i,j}{\max }\underset{x\in
\omega _{1}}{\text{ }\sup }\left\vert \partial _{i}a_{ij}(x)-\tau
_{h}\partial _{i}a_{ij}(x)\right\vert =0,
\end{equation*}%
and therefore, from the above inequality we get%
\begin{equation}
\underset{h\rightarrow 0}{\lim }\underset{k\in 
%TCIMACRO{\U{2115} }%
%BeginExpansion
\mathbb{N}
%EndExpansion
}{\text{ }\sup }\dsum\limits_{i,j}\left\Vert \partial _{i}a_{ij}^{k}\partial
_{j}u_{k}-\partial _{i}\tau _{h}a_{ij}^{k}\partial _{j}\tau
_{h}u_{k}\right\Vert _{L^{2}(\omega _{1})}=0,  \label{18}
\end{equation}%
where we have used (\ref{19}) and \textbf{Lemma\ 2.}

Passing to the limit in (\ref{16}) by using (\ref{17}), (\ref{18}) and (\ref%
{19}) with \textbf{Lemma 2} we deduce%
\begin{equation*}
\underset{h\rightarrow 0}{\lim }\underset{k\in 
%TCIMACRO{\U{2115} }%
%BeginExpansion
\mathbb{N}
%EndExpansion
}{\text{ }\sup }\left\Vert G_{k}^{h}\right\Vert _{L^{2}(\omega _{1})}=0.
\end{equation*}%
and the proposition follows.
\end{proof}

\begin{corollary}
For every $\omega \subset \subset \Omega $ open we have 
\begin{equation*}
\begin{array}{cc}
\underset{h\rightarrow 0}{\lim }\underset{k\in 
%TCIMACRO{\U{2115} }%
%BeginExpansion
\mathbb{N}
%EndExpansion
}{\text{ }\sup }\left\Vert \tau _{h}\nabla _{X_{2}}^{2}u_{k}-\nabla
_{X_{2}}^{2}u_{k}\right\Vert _{L^{p}(\omega )}=0,\text{ \ \ \ \ \ \ \ \ \ \
\ } &  \\ 
\underset{h\rightarrow 0}{\lim }\underset{k\in 
%TCIMACRO{\U{2115} }%
%BeginExpansion
\mathbb{N}
%EndExpansion
}{\text{ }\sup }\left\Vert \epsilon _{k}^{2}(\tau _{h}\nabla
_{X_{1}}^{2}u_{k}-\nabla _{X_{1}}^{2}u_{k})\right\Vert _{L^{p}(\omega )}=0,%
\text{ \ \ \ \ \ } &  \\ 
\underset{h\rightarrow 0}{\lim }\underset{k\in 
%TCIMACRO{\U{2115} }%
%BeginExpansion
\mathbb{N}
%EndExpansion
}{\text{ }\sup }\left\Vert \epsilon _{k}(\tau _{h}\nabla
_{X_{1}X_{2}}^{2}u_{k}-\nabla _{X_{1}X_{2}}^{2}u_{k})\right\Vert
_{L^{p}(\omega )}=0\text{. } & 
\end{array}%
\end{equation*}
\end{corollary}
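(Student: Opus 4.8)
The plan is to deduce this corollary from \textbf{Proposition 5} by exactly the compactness device already used to pass from \textbf{Proposition 4} to \textbf{Corollary 1}. Fix an open set $\omega \subset\subset \Omega$. For each point $z \in \bar{\omega}$, \textbf{Proposition 5} furnishes an open set $\omega_z \subset\subset \Omega$ with $z \in \omega_z$ for which the three translation limits
\begin{equation*}
\underset{h\rightarrow 0}{\lim }\,\underset{k\in \mathbb{N}}{\sup }\left\Vert \tau _{h}\nabla _{X_{2}}^{2}u_{k}-\nabla _{X_{2}}^{2}u_{k}\right\Vert _{L^{p}(\omega _{z})}=0,
\end{equation*}
and likewise for $\epsilon_k^2(\tau_h\nabla_{X_1}^2 u_k-\nabla_{X_1}^2 u_k)$ and $\epsilon_k(\tau_h\nabla_{X_1 X_2}^2 u_k-\nabla_{X_1 X_2}^2 u_k)$, hold on $\omega_z$. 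The family $\{\omega_z\}_{z\in\bar{\omega}}$ is an open cover of the compact set $\bar{\omega}$, so there are finitely many points $z_1,\dots,z_m$ with $\bar{\omega}\subset\bigcup_{i=1}^m\omega_{z_i}$, hence $\omega\subset\bigcup_{i=1}^m\omega_{z_i}$.

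Next I would use the elementary subadditivity, valid for any measurable $v$ and any $1\le p<\infty$,
\begin{equation*}
\left\Vert v\right\Vert _{L^{p}(\omega )}^{p}\leq \sum_{i=1}^{m}\left\Vert v\right\Vert _{L^{p}(\omega _{z_{i}})}^{p},\qquad \text{hence}\qquad \left\Vert v\right\Vert _{L^{p}(\omega )}\leq \sum_{i=1}^{m}\left\Vert v\right\Vert _{L^{p}(\omega _{z_{i}})}.
\end{equation*}
Applying this with $v=\tau_h\nabla_{X_2}^2 u_k-\nabla_{X_2}^2 u_k$, taking the supremum over $k\in\mathbb{N}$ (which passes inside the finite sum) and then letting $h\rightarrow 0$ (which again commutes with a finite sum), each of the $m$ terms tends to $0$ by \textbf{Proposition 5}, so the left-hand side tends to $0$ as well. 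The very same computation, applied verbatim to $\epsilon_k^2(\tau_h\nabla_{X_1}^2 u_k-\nabla_{X_1}^2 u_k)$ and to $\epsilon_k(\tau_h\nabla_{X_1 X_2}^2 u_k-\nabla_{X_1 X_2}^2 u_k)$, yields the remaining two limits, which is the assertion of the corollary.

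There is no real obstacle here: the argument is purely a finite covering argument. The only points requiring a little care are (i) restricting to $\left\vert h\right\vert$ small enough that the translated sets $\omega_{z_i}+h$ implicit in \textbf{Proposition 5} remain compactly contained in $\Omega$ — harmless since $h\rightarrow 0$, and one simply intersects the finitely many thresholds $\delta_i$ coming from the $\omega_{z_i}$ — and (ii) the interchange of $\sup_k$ and $\lim_{h\to 0}$ with the finite sum over $i$, which is immediate precisely because the index set is finite. Combined with the uniform $L_{loc}^{2}$ bounds of \textbf{Corollary 1}, this corollary provides exactly the hypotheses of the Riesz--Fr\'{e}chet--Kolmogorov compactness theorem on each $\omega\subset\subset\Omega$, which is what is then needed to establish \textbf{Theorem 1} in the general case by repeating the argument of the proof of \textbf{Theorem 2}.
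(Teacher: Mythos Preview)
Your argument is correct and is exactly the approach the paper takes: the paper's proof simply says ``Similar to proof of \textbf{Corollary 1}, where we use the compacity of $\bar{\omega}$ and \textbf{Proposition 5},'' and you have spelled out precisely this finite-cover argument. The additional remarks about intersecting the finitely many thresholds $\delta_i$ and about the role of the result in setting up the Riesz--Fr\'{e}chet--Kolmogorov hypotheses are accurate and helpful elaborations.
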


\begin{proof}
Similar to proof of \textbf{Corollary 1},where we use the compacity of $\bar{%
\omega}$ and \textbf{Proposition 5}.
\end{proof}

Now, we are able to give the proof of the main theorem. Indeed it is similar
to proof of \textbf{Theorem 2, }where we will use \textbf{Corollary 1 }and 
\textbf{Corollary 2. }Let us prove the convergence%
\begin{equation*}
\epsilon ^{2}\nabla _{X_{1}}^{2}u_{\epsilon }\rightarrow 0\text{ in }%
L_{loc}^{2}(\Omega ).
\end{equation*}

Fix $\omega \subset \subset \Omega $ open, and let $u_{k}\in
W_{0}^{1,2}(\Omega )\cap W_{loc}^{2,2}(\Omega )$ be a sequence of solutions
of (\ref{1}), then it follows from\textbf{\ Corollary 1} and \textbf{2 }that
the subset $\left\{ \epsilon _{k}^{2}\nabla _{X_{1}}^{2}u_{k}\right\} _{k\in 
%TCIMACRO{\U{2115} }%
%BeginExpansion
\mathbb{N}
%EndExpansion
}$ is relatively compact in $L^{2}(\omega )$ then there exists $v^{\omega
}\in L^{2}(\omega )$ and a subsequence still labeled $(\epsilon
_{k}^{2}\nabla _{X_{1}}^{2}u_{k})$ such that 
\begin{equation*}
\epsilon _{k}^{2}\nabla _{X_{1}}^{2}u_{k}\rightarrow v^{\omega }\text{ in }%
L^{2}(\omega ),
\end{equation*}%
and since $\epsilon _{k}^{2}u_{k}\rightarrow 0$ in $L^{2}(\omega )$ then $%
v^{\omega }=0$ (we used the continuity of $\nabla _{X_{1}}^{2}$on $\mathcal{D%
}^{^{\prime }}(\omega )$). Hence by the diagonal process one can construct a
sequence still labeled $(\epsilon _{k}^{2}\nabla _{X_{1}}^{2}u_{k})$ such
that 
\begin{equation*}
\epsilon _{k}^{2}\nabla _{X_{1}}^{2}u_{k}\rightarrow 0\text{ in }%
L_{loc}^{2}(\Omega ).
\end{equation*}%
To prove the convergence for the whole sequence $(\epsilon ^{2}\nabla
_{X_{1}}^{2}u_{\epsilon })_{0<\epsilon \leq 1}$, we can reason by
contradiction (recall that $L_{loc}^{2}(\Omega )$ equipped with the family
of semi norms $(\left\Vert \cdot \right\Vert _{L^{2}(\omega )})_{\omega
\subset \subset \Omega }$ is a Fr\'{e}chet space), and the proof of the main
theorem is finished.

\subsection{A convergence result for some class of semilinear problem}

In this section we deal with the following semilinear elliptic problem 
\begin{equation}
\left\{ 
\begin{array}{cc}
-\func{div}(A_{\epsilon }\nabla u_{\epsilon })=a(u_{\epsilon })+f &  \\ 
u_{\epsilon }\in W_{0}^{1,2}(\Omega ) & 
\end{array}%
\right. ,  \label{semili}
\end{equation}%
where $a:%
%TCIMACRO{\U{211d} }%
%BeginExpansion
\mathbb{R}
%EndExpansion
\rightarrow 
%TCIMACRO{\U{211d} }%
%BeginExpansion
\mathbb{R}
%EndExpansion
$ a continuous nonincreasing real valued function which satisfies the growth
condition%
\begin{equation}
\forall x\in 
%TCIMACRO{\U{211d} }%
%BeginExpansion
\mathbb{R}
%EndExpansion
:\left\vert a(x)\right\vert \leq c\left( 1+\left\vert x\right\vert \right) ,
\label{crowth}
\end{equation}%
for some $c\geq 0.$ This problem has been treated in \cite{chok} for $f\in
L^{p}(\Omega )$, $1<p\leq 2$, and the author have proved the convergences%
\begin{equation}
\epsilon \nabla _{X_{1}}u_{\epsilon }\rightarrow 0\text{, }u_{\epsilon
}\rightarrow u_{0}\text{, }\nabla _{X_{2}}u_{\epsilon }\rightarrow \nabla
_{X_{2}}u_{0}\text{ in }L^{p}(\Omega ),  \label{conv}
\end{equation}%
where $u_{0}$ is the solution of the limit problem.

Let $f\in L^{2}(\Omega )$ and assume $A$ as in \textbf{Theorem 1} then the
unique $W_{0}^{1,2}(\Omega )$ weak solution $u_{\epsilon }$ to (\ref{semili}%
) belongs to $W_{loc}^{2,2}(\Omega )$. Following the same arguments exposed
in the above subsection one can prove the theorem

\begin{theorem}
Under the above assumptions we have $u_{\epsilon }\rightarrow u_{0}$ in $%
V_{loc}^{2,2}$, $\epsilon ^{2}\nabla _{X_{1}}^{2}u_{\epsilon }\rightarrow 0$
and $\epsilon \nabla _{X_{1}X_{2}}^{2}u_{\epsilon }\rightarrow 0$ strongly
in $L_{loc}^{2}(\Omega ).$
\end{theorem}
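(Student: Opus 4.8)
The plan is to mimic the proof of \textbf{Theorem 1} (the linear case treated in Subsection 4.1) line by line, isolating the extra term that comes from the semilinear right-hand side $a(u_\epsilon)$ and showing it causes no new difficulty. Write $u_k=u_{\epsilon_k}$ for a sequence $\epsilon_k\to 0$, and set $f_k:=a(u_k)+f$. The key preliminary observation is that, by the growth condition (\ref{crowth}) and the $L^2$-convergence $u_k\to u_0$ from (\ref{conv}), the sequence $(f_k)$ is \emph{bounded in $L^2(\Omega)$} and moreover $a(u_k)\to a(u_0)$ in $L^2_{loc}(\Omega)$ (pass to a subsequence converging a.e., dominate by $c(1+|u_k|)$ which converges in $L^2$, and apply a generalized dominated convergence / Vitali argument). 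Consequently $(f_k)$ plays exactly the role that $f$ did in the linear theorem, with the harmless difference that it now depends on $k$; it is bounded in $L^2$ and convergent in $L^2_{loc}$.

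Next I would re-run \textbf{Proposition 4} and \textbf{Corollary 1}: localizing with a cutoff $\rho$ supported near a point $z_0$, freezing the coefficients $a_{ij}^k(z_0)$, applying \textbf{Proposition 3}, and absorbing the oscillation terms via the choice of $\theta$ in (\ref{13}). In the resulting function $g_k$ (the analogue of (\ref{12})) the forcing $f$ is simply replaced by $f_k=a(u_k)+f$; since $(f_k)$ is bounded in $L^2(\omega_1)$ and $\rho$, the $a_{ij}$ and their first derivatives are bounded on $\omega_1$, while $(\epsilon_k\nabla_{X_1}u_k)$, $(\nabla_{X_2}u_k)$, $(u_k)$ are bounded in $L^2$ by (\ref{conv}), the sequence $(g_k)$ is again bounded in $L^2(\omega_1)$. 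Hence $(\nabla_{X_2}^2u_k)$, $(\epsilon_k^2\nabla_{X_1}^2u_k)$, $(\epsilon_k\nabla_{X_1X_2}^2u_k)$ are bounded in $L^2_{loc}(\Omega)$, exactly as in \textbf{Corollary 1}.

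Then I would reproduce \textbf{Proposition 5} and \textbf{Corollary 2}, i.e. the equicontinuity of translations. The translated-and-localized function $\mathcal W_k^h=\rho(\tau_h u_k-u_k)$ satisfies the same frozen-coefficient equation with a right-hand side $G_k^h$ whose structure copies (\ref{15}), the only new contribution being $\rho\,(\tau_h a(u_k)-a(u_k))$ in place of (part of) $\rho(\tau_h f-f)$. I must show $\sup_k\|\tau_h a(u_k)-a(u_k)\|_{L^2(\omega_1)}\to 0$ as $h\to 0$; this follows from \textbf{Lemma 2} applied to the $L^2_{loc}$-convergent sequence $(a(u_k))$ (whose convergence was established in the first paragraph), precisely as \textbf{Lemma 2} was used for $(\tau_h f-f)$, $(\nabla_{X_2}U_k^h)$, etc. All the remaining terms of $G_k^h$ — those involving $\tau_h a_{ij}^k-a_{ij}^k$, $\tau_h\partial_i a_{ij}^k-\partial_i a_{ij}^k$, and $U_k^h$ itself — are handled verbatim as in \textbf{Proposition 5}, using uniform continuity of $A$ and $\nabla A$ on compact subsets together with the uniform bounds from \textbf{Corollary 1}. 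This yields $\lim_{h\to0}\sup_k\|G_k^h\|_{L^2(\omega_1)}=0$ and hence the three translation-equicontinuity limits on $\omega_0$, which propagate to every $\omega\subset\subset\Omega$ by the compactness covering argument of \textbf{Corollary 2}.

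Finally, with boundedness in $L^2_{loc}$ and equicontinuity of translations in hand, the Riesz--Fr\'echet--Kolmogorov theorem gives relative compactness of $\{\nabla_{X_2}^2u_k\}$, $\{\epsilon_k^2\nabla_{X_1}^2u_k\}$, $\{\epsilon_k\nabla_{X_1X_2}^2u_k\}$ in $L^2(\omega)$ for every $\omega\subset\subset\Omega$. Identifying limits via continuity of $\partial_{ij}^2$ on $\mathcal D'(\omega)$ and the $L^2_{loc}$-convergence $u_k\to u_0$, $\epsilon_k^2u_k\to0$, $\epsilon_ku_k\to0$ shows $\nabla_{X_2}^2u_k\to\nabla_{X_2}^2u_0$, $\epsilon_k^2\nabla_{X_1}^2u_k\to0$, $\epsilon_k\nabla_{X_1X_2}^2u_k\to0$ in $L^2_{loc}$ along a subsequence; a diagonal extraction and the usual contradiction argument upgrade this to convergence of the full family $(u_\epsilon)_{0<\epsilon\le1}$ in $V_{loc}^{2,2}$ and of $\epsilon^2\nabla_{X_1}^2u_\epsilon$, $\epsilon\nabla_{X_1X_2}^2u_\epsilon$ to $0$ in $L^2_{loc}(\Omega)$. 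The main (and really the only) obstacle is the first step: establishing that $(a(u_k))$ is bounded in $L^2$ and convergent in $L^2_{loc}(\Omega)$; once that is secured, $a(u_k)+f$ is interchangeable with the datum $f$ of \textbf{Theorem 1} and the entire machinery of Subsection 4.1 applies without modification.
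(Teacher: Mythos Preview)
Your proposal is correct and follows essentially the same approach as the paper: treat $a(u_k)+f$ as the forcing term, use the growth condition and $L^2$-convergence $u_k\to u_0$ to get $a(u_k)\to a(u_0)$ in $L^2$ (the paper phrases this as continuity of the Nemytskii operator, you use a Vitali/dominated-convergence argument --- same content), then apply \textbf{Lemma 2} to the new translation term $\tau_h a(u_k)-a(u_k)$ and rerun the linear machinery verbatim. The only presentational difference is that the paper writes out just the Laplacian case $A=Id$ and declares the general case ``similar'', whereas you spell out the full Subsection~4.1 argument; note also that \textbf{Lemma 2} as stated asks for convergence in $L^2(\Omega)$ rather than $L^2_{loc}$, but your argument in fact yields the former (or one may simply apply the lemma on a slightly larger $\omega''\subset\subset\Omega$).
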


\begin{proof}
The arguments are similar, we only give the proof for the Laplacian case, so
assume that $A=Id$.

Let $\omega \subset \subset \Omega $ open$,$ then one can choose $\omega
^{\prime }$ open such that $\omega \subset \subset \omega ^{\prime }\subset
\subset \Omega ,$ let $\rho \in \mathcal{D}(%
%TCIMACRO{\U{211d} }%
%BeginExpansion
\mathbb{R}
%EndExpansion
^{N})$ with $\rho =1$ on $\omega $, $0\leq \rho \leq 1$ and $Supp(\rho
)\subset \omega ^{\prime }$. Let $0<h<dist(\partial \omega ^{\prime },$ $%
\Omega ),$ we use the same notations of the above subsection, we set $%
U_{k}^{h}=\tau _{h}u_{k}-u_{k}$, then $U_{k}^{h}\in W^{1,2}(\omega ^{\prime
})$ and we have%
\begin{equation*}
-\epsilon _{k}^{2}\Delta _{X_{1}}U_{k}^{h}(x)-\Delta
_{X_{2}}U_{k}^{h}(x)=F^{h}(x)+\tau _{h}a(u)(x)-a(u)(x)\text{, \ a.e }x\in
\omega ^{\prime },
\end{equation*}%
with $F^{h}=\tau _{h}f-f$. We set $\mathcal{W}_{k}^{h}=\rho U_{k}^{h}$ then
we get as in \textbf{Proposition 2} 
\begin{multline*}
\left\Vert \tau _{h}\nabla _{X_{2}}^{2}u_{k}-\nabla
_{X_{2}}^{2}u_{k}\right\Vert _{L^{2}(\omega )}\leq \left\Vert
F^{h}\right\Vert _{L^{2}(\omega ^{\prime })}+M\left\Vert \epsilon _{k}\nabla
_{X_{1}}U_{k}^{h}\right\Vert _{L^{2}(\omega ^{\prime })} \\
+\left\Vert \tau _{h}a(u_{k})-a(u_{k})\right\Vert _{L^{2}(\omega ^{\prime })}
\\
+M\left\Vert \nabla _{X_{2}}U_{k}^{h}\right\Vert _{L^{2}(\omega ^{\prime
})}+M\left\Vert U_{k}^{h}\right\Vert _{L^{2}(\omega ^{\prime })}.
\end{multline*}%
We can prove easily, by using the continuity of the function $a$ and (\ref%
{crowth}), that the Nemytskii operator $a$ maps continuously $L^{2}(\Omega )$
to $L^{2}(\Omega )$. Therefore, the convergence $u_{k}\rightarrow u_{0}$ in $%
L^{2}(\Omega )$ gives $a(u_{k})\rightarrow a(u_{0})$ in $L^{2}(\Omega )$,
and hence \textbf{Lemma 2 }gives%
\begin{equation*}
\underset{h\rightarrow 0}{\lim \text{ }}\underset{k\in 
%TCIMACRO{\U{2115} }%
%BeginExpansion
\mathbb{N}
%EndExpansion
}{\sup }\left\Vert \tau _{h}a(u_{k})-a(u_{k})\right\Vert _{L^{2}(\omega )}=0,
\end{equation*}%
and finally the convergences (\ref{conv}) give 
\begin{equation*}
\underset{h\rightarrow 0}{\lim \text{ }}\underset{k\in 
%TCIMACRO{\U{2115} }%
%BeginExpansion
\mathbb{N}
%EndExpansion
}{\sup }\left\Vert \tau _{h}\nabla _{X_{2}}^{2}u_{k}-\nabla
_{X_{2}}^{2}u_{k}\right\Vert _{L^{2}(\omega )}=0.
\end{equation*}%
Similarly, using boundedness of the sequences $(u_{k})$, $(\epsilon
_{k}\nabla _{X_{1}}u_{k})$, $(\nabla _{X_{2}}u_{k})$and $a(u_{k})$ in $%
L^{2}(\Omega ),$ and boundedness of $\rho $ and its derivatives we get 
\begin{equation*}
\left\Vert \nabla _{X_{2}}^{2}u_{k}\right\Vert _{L^{2}(\omega )}\leq
M^{\prime },
\end{equation*}%
and we conclude as in proof of \textbf{Theorem 2.}
\end{proof}

We complete this paper by giving an open question

\begin{problem}
Let $f\in L^{p}(\Omega )$ with $1<p<2$, and consider problem (\ref{1}). In 
\cite{chok} the author have proved the convergence $u_{\epsilon }\rightarrow
u_{0}$ in the Banach space $V^{1,p}$ defined by%
\begin{equation*}
V^{1,p}=\left\{ u\in L^{p}(\Omega )\text{ }\mid \nabla _{X_{2}}u\in
L^{p}(\Omega )\text{ and }u(X_{1},\cdot )\in W_{0}^{1,p}(\Omega _{X_{1}})%
\text{ a.e }X_{1}\in \Omega ^{1}\text{ }\right\} ,
\end{equation*}%
equipped with the norm%
\begin{equation*}
\left\Vert u\right\Vert _{1,p}=\left( \left\Vert u\right\Vert _{L^{p}(\Omega
)}^{p}+\left\Vert \nabla _{X_{2}}u\right\Vert _{L^{p}(\Omega )}^{p}\right) ^{%
\frac{1}{p}}.
\end{equation*}%
Similarly we introduce the Fr\'{e}chet space 
\begin{equation*}
V_{loc}^{2,p}=\left\{ u\in V^{1,p}\text{ }\mid \nabla _{X_{2}}^{2}u\in
L^{p}(\Omega )\right\} ,
\end{equation*}%
equipped with family of norms 
\begin{equation*}
\left\Vert u\right\Vert _{2,p}^{\omega }=\left( \left\Vert u\right\Vert
_{L^{p}(\Omega )}^{p}+\left\Vert \nabla _{X_{2}}u\right\Vert _{L^{p}(\Omega
)}^{p}+\left\Vert \nabla _{X_{2}}^{2}u\right\Vert _{L^{p}(\omega
)}^{p}\right) ^{\frac{1}{p}}\text{, }\omega \subset \subset \Omega \text{
open.}
\end{equation*}%
Can one prove that $u_{\epsilon }\rightarrow u_{0}$ in $V_{loc}^{2,p}$?
\end{problem}

\bigskip

\end{document}